\newcommand{\cp}{\mathcal{P}}
\newcommand{\N}{\mathcal{N}}
\newcommand{\F}{\mathcal{F}}
\newcommand{\PM}{\mathrm{pm}}
\newcommand{\eps}{\varepsilon}
\newcommand{\sub}{\subseteq}
\newcommand{\f}[2]{\frac{#1}{#2}}
\newcommand{\Z}{\mathbb{Z}}
\newcommand{\E}{\mathbb{E}}
\newcommand{\B}{\mathcal{B}}
\newcommand{\C}{\mathcal{C}}
\newcommand\lrpar[1]{\left(#1\right)}
\newcommand\floor[1]{\left\lfloor #1 \right \rfloor}
\newcommand\abs[1]{\left|#1\right|} 
\newcommand{\vc}{v^{(c)}}
\newcommand{\TV}{\mathrm{d}_{\mathrm{TV}}}
\newcommand{\Po}{\mathrm{Po}}
\newcommand{\gam}{\gamma}
\newtheorem{thm}{Theorem}[section]
\newtheorem{lemma}[thm]{Lemma}
\newtheorem{prop}[thm]{Proposition}
\newtheorem{claim}[thm]{Claim}
\newtheorem{conj}[thm]{Conjecture}
\tikzstyle{black}=[fill=black, draw=black, shape=circle, scale=0.3]
\tikzstyle{none}=[]
\newtheoremstyle{LemmaNum}
{\topsep}{\topsep}
{\itshape}
{}
{\bfseries} 
{.}
{ }
{\thmname{#1}\thmnote{ \bfseries #3}}
\theoremstyle{LemmaNum}
\newtheorem{propn}{Proposition}
\title{Counting Deranged Matchings}
\author{Sam Spiro\footnote{Dept.\ of Mathematics, Rutgers University {\tt sas703@scarletmail.rutgers.edu}. This material is based upon work supported by the National Science Foundation Mathematical Sciences Postdoctoral Research Fellowship under Grant No. DMS-2202730.} \and Erlang Surya \footnote{Dept.\ of Mathematics, University of California, San Diego {\tt esurya@ucsd.edu}. This material is based upon work supported by the National Science Foundation Grant No. DMS-2225631.}}
\begin{document}
	
	\maketitle
	
	\begin{abstract}
		Let $\mathrm{pm}(G)$ denote the number of perfect matchings of a graph $G$, and let $K_{r\times 2n/r}$ denote the complete $r$-partite graph where each part has size $2n/r$.   Johnson, Kayll, and Palmer conjectured that for any perfect matching $M$ of $K_{r\times 2n/r}$, we have for $2n$ divisible by $r$
		\[\frac{\mathrm{pm}(K_{r\times 2n/r}-M)}{\mathrm{pm}(K_{r\times 2n/r})}\sim e^{-r/(2r-2)}.\]
		This conjecture can be viewed as a common generalization of counting the number of derangements on $n$ letters, and of counting the number of deranged matchings of $K_{2n}$.  We prove this conjecture.  In fact, we prove the stronger result that if $R$ is a uniformly random perfect matching of $K_{r\times 2n/r}$, then the number of edges that $R$ has in common with $M$ converges to a Poisson distribution with parameter $\frac{r}{2r-2}$.
	\end{abstract}
	
	\section{Introduction}
	
	A \textit{derangement} is a permutation $\pi$ with $\pi_i\ne i$ for any $i$, and the number of derangements of size $n$ is denoted by $d_n$.  Derangements are one of the most well studied objects in combinatorics, whose origins date back to 1708 with work of de Montmort~\cite{de1713essay}.

	A common interpretation of derangements comes from the hat check problem: if $n$ people each have a hat and these hats are randomly distributed back to each person, what is the probability that no person receives their own hat?  This probability is easily seen to be equal to $d_n/n!$, and it is by now a standard exercise in the principle of inclusion-exclusion to show the surprising fact that
	\begin{equation}
		\f{d_n}{n!}\sim e^{-1}.\label{eq:deranged}\end{equation}
	We refer the interested reader to Stanley~\cite{stanley2011enumerative} for further results and problems related to derangements.

	The asymptotic formula \eqref{eq:deranged} can be interpreted in terms of graph theory.  To this end, let $\PM(G)$ denote the number of perfect matchings of a graph $G$.  It is not difficult to see $\PM(K_{n,n})=n!$ and $\PM(K_{n,n}-M)=d_n$ for any perfect matching $M$ of $K_{n,n}$.  Thus \eqref{eq:deranged} is equivalent to saying
	\begin{equation}\f{\PM(K_{n,n}-M)}{\PM(K_{n,n})}\sim e^{-1}.\label{eq:graphderange}\end{equation}
	
	Another question that can be phrased in the language above is the kindergarten problem: if $2n$ kindergarteners pair up for a field trip at the start of the day and then are randomly paired up again at the end of the day, what is the probability that no pair of kindergartners are matched at both the start and end of the day?  This problem was motivated by a question from the United States Tennis Association about the tournament draw for the 1996 U.S.\ Open, and it was reformulated in terms of kindergarteners by Kayll.
	
	Brawner~\cite{brawner2000dinner} conjectured that the probability for the kindergarten problem tends to $e^{-1/2}$, and this was proven by Margolius~\cite{margolius2001avoiding}.  By viewing the kindergartners as the vertices of $K_{2n}$, this result is equivalent to saying
	\begin{equation}\f{\PM(K_{2n}-M)}{\PM(K_{2n})}\sim e^{-1/2}.\label{eq:derangedMatchings}\end{equation}
	Noticing the similarities between \eqref{eq:graphderange} and \eqref{eq:derangedMatchings}, Johnston, Kayll, and Palmer~\cite{johnston2022deranged} made the following conjecture concerning perfect matchings of $K_{r\times 2n/r}$, the complete $r$-partite graph with parts of sizes $2n/r$.
	\begin{conj}\label{conj:main}
		If $r=r(n)\ge 2$ is integer valued and divides $2n$, and if $M_n$ is a sequence of perfect matchings in $K_{r\times 2n/r}$, then
		\[\f{\PM(K_{r\times 2n/r}-M_n)}{\PM(K_{r\times 2n/r})}\sim e^{-r/(2r-2)},\]
		as $n$ tends to infinity. 
	\end{conj}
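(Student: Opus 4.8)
The ratio in the conjecture equals $\Pr[X=0]$, where $X = X_n := |R\cap M_n|$ counts the common edges of $M_n$ with a uniformly random perfect matching $R$ of $G := K_{r\times 2n/r}$. So, to prove the conjecture (indeed the stronger Poisson statement from the abstract), it suffices to show that $X$ converges to $\Po(\lambda)$ with $\lambda = \lambda(n) := \tfrac{r}{2r-2}\in(\tfrac12,1]$, and in particular that $\Pr[X=0]\sim e^{-\lambda}$. The plan is to use the method of factorial moments. Write $X = \sum_{e\in M_n}\mathbf 1[e\in R]$ and $S_k := \sum_{\{e_1,\dots,e_k\}\in\binom{M_n}{k}}\Pr[e_1,\dots,e_k\in R] = \tfrac1{k!}\E[(X)_k]$, so that the Bonferroni inequalities give $\sum_{k=0}^{2t+1}(-1)^k S_k\le\Pr[X=0]\le\sum_{k=0}^{2t}(-1)^k S_k$ for every $t\ge 0$. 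Since $e^{-\lambda}$ is bounded away from $0$, it then suffices to prove that for every fixed $k\ge 1$,
\[\E[(X)_k]\;=\;\lambda^k+o(1)\qquad\text{as }n\to\infty,\]
uniformly over the admissible values of $r$; letting $n\to\infty$ and then $t\to\infty$ in the Bonferroni bounds yields $\Pr[X=0]-e^{-\lambda}\to 0$. (The same computation applied to mixed factorial moments, together with a standard method-of-moments argument, gives the full convergence $X\Rightarrow\Po(\lambda)$.)

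For the moments, fix a set $S$ of $k$ pairwise disjoint edges of $M_n$. Then $\Pr[S\subseteq R] = \PM(G-V(S))/\PM(G)$, and $G-V(S)$ is the complete multipartite graph $K_{a_1,\dots,a_r}$ with $a_i = m - d_i$, where $m := 2n/r$ and $d_i\in\{0,1,\dots,k\}$ counts the edges of $S$ meeting the $i$th part, so $\sum_i d_i = 2k$. The crucial estimate, to be proved below, is that uniformly over all such $S$ (equivalently over all $(a_1,\dots,a_r)$ of this form) and over $2\le r\le 2n$,
\[\Pr[S\subseteq R]\;=\;\frac{\PM(K_{a_1,\dots,a_r})}{\PM(K_{r\times m})}\;=\;\bigl(1+o(1)\bigr)\Bigl(\frac{\lambda}{n}\Bigr)^{k}.\]
Granting this, $\E[(X)_k] = k!\sum_{S}\Pr[S\subseteq R] = k!\binom{n}{k}\bigl(1+o(1)\bigr)(\lambda/n)^k\to\lambda^k$, as required.

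To prove the crucial estimate I would start from the identity
\[\PM(K_{a_1,\dots,a_r})\;=\;\Bigl(\prod_i a_i!\Bigr)\,c(a_1,\dots,a_r),\qquad c(a_1,\dots,a_r):=\Bigl[\prod_i x_i^{a_i}\Bigr]\exp\Bigl(\sum_{i<j}x_ix_j\Bigr),\]
which follows by classifying the perfect matchings of $K_{a_1,\dots,a_r}$ according to the loopless multigraph on $[r]$ that records, for each matching edge, the pair of parts it joins: a matching of ``type'' $(h_{ij})$ occurs in exactly $\bigl(\prod_i a_i!\bigr)\big/\prod_{i<j}h_{ij}!$ ways. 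Extracting the coefficient $c(\cdot)$ by Cauchy's formula with all radii set equal to the symmetric saddle value $\rho_* := \sqrt{m/(r-1)}$ (the solution of $\rho\bigl(r\rho-\rho\bigr)=m$), one analyzes the resulting $r$-fold integral by a local-limit expansion around $\vec x = (\rho_*,\dots,\rho_*)$ and shows $c(m-d_1,\dots,m-d_r)/c(m,\dots,m) = (1+o(1))\rho_*^{2k}$ — the displacement of the saddle by the bounded perturbation $\vec d$ affects only lower-order terms, since $\sum_i d_i = 2k$ is fixed. Combining with $\prod_i (m-d_i)!/m! = (1+o(1))\,m^{-2k}$ gives
\[\Pr[S\subseteq R]\;=\;(1+o(1))\,m^{-2k}\rho_*^{2k}\;=\;(1+o(1))\Bigl(\frac{1}{m(r-1)}\Bigr)^{k}\;=\;(1+o(1))\Bigl(\frac{\lambda}{n}\Bigr)^{k},\]
since $\frac{1}{m(r-1)} = \frac{r}{2n(r-1)} = \frac{\lambda}{n}$. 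As a sanity check, the case $k=1$ is exact: conditioning on the partner of a fixed vertex gives the recursion $\PM(K_{r\times m}) = m(r-1)\,\PM(K_{r\times m}-u-v)$, whence $\Pr[e\in R] = \frac{1}{m(r-1)} = \frac{\lambda}{n}$ and $\E[X] = \lambda$ on the nose.

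The main obstacle is making the coefficient asymptotics uniform across the entire range $2\le r\le 2n$. When $r$ is bounded, $m = 2n/r\to\infty$ and all parts are large, so the problem is a perturbation of the $K_{n,n}$ case and the saddle-point analysis applies cleanly; but when $r$ has order $n$, $m$ is bounded and the parts are tiny (a perturbation of $K_{2n}$), so $\vec d$ is no longer small compared with $m$ and the single-variable saddle point in those coordinates is no longer valid. In that regime one should argue directly instead — for instance via the exact inclusion–exclusion expansion $\PM(K_{a_1,\dots,a_r}) = \sum_{\vec j\ge 0}(-1)^{|\vec j|}\bigl(\prod_i \tfrac{a_i!}{(a_i-2j_i)!\,2^{j_i}j_i!}\bigr)(2(N-|\vec j|)-1)!!$ with $2N = \sum_i a_i$, whose leading terms dominate when the $a_i$ are bounded, or by applying the vertex-peeling recursion above $O(k)$ times — while in the intermediate regimes one must track the error terms in the saddle-point expansion and verify they remain $o(1)$. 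A secondary technical point is that $S$ may have several edges between a single pair of parts, so the estimate is needed for all $(a_1,\dots,a_r)$ with $m-a_i\in\{0,1,\dots,k\}$, not just the ``spread out'' case $m-a_i\in\{0,1\}$; this is harmless for large $m$ but must be checked when $m$ is bounded.
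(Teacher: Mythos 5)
Your approach is genuinely different from the paper's: you reduce the problem to factorial moments via Bonferroni, then try to estimate $\Pr[S\subseteq R] = \PM(K_{a_1,\dots,a_r})/\PM(K_{r\times m})$ by coefficient extraction and a multivariate saddle-point analysis, whereas the paper uses switching arguments to directly estimate the ratios $|\N_k|/|\N_{k-1}|$ between the numbers of perfect matchings sharing $k$ and $k-1$ edges with $M_n$. Your reduction to the estimate $\E[(X)_k]=\lambda^k+o(1)$ (uniformly in $r$) is sound, the generating-function identity $\PM(K_{a_1,\dots,a_r})=\bigl(\prod_i a_i!\bigr)\bigl[\prod_i x_i^{a_i}\bigr]\exp\bigl(\sum_{i<j}x_ix_j\bigr)$ is correct, the symmetric saddle $\rho_*=\sqrt{m/(r-1)}$ is the right one, and your $k=1$ sanity check is accurate. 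So the skeleton and the expected asymptotics are right.

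However, the crucial estimate is not actually proved, and the gap is precisely where the difficulty of the problem lives. You yourself flag it: for bounded $r$ you sketch but do not execute the saddle-point analysis (already nontrivial, as the perturbation $\vec d$ breaks the symmetry of the $r$-fold contour integral); for $r=\Theta(n)$ with $m=O(1)$ the saddle-point expansion is inapplicable and you only gesture at alternatives (exact inclusion--exclusion, vertex peeling); and for the intermediate range $1\ll r\ll n$ you write that ``one must track the error terms \dots and verify they remain $o(1)$'' without doing so. This uniformity over $2\le r\le 2n$ is exactly what makes the conjecture hard and where prior partial results broke down. The paper handles it by proving two separate ratio lemmas --- one for fixed $r$ via switching plus a ``typical perfect matching is balanced'' input, and one for arbitrary graphs of minimum degree $2n-o(n)$ which covers all $r\to\infty$ --- and then stitches them together with the $t(\eps,n)$ threshold argument in the proof of Theorem~\ref{thm:strongmain}. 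If you want to make your moment approach rigorous, you would need to prove your own version of that stitching: carry out the saddle-point estimate with explicit error control for all bounded $r$, and separately prove the moment estimate for $r\to\infty$ by a direct combinatorial argument (the $O(k)$-fold vertex-peeling recursion you mention is a plausible route, but it must be made to accommodate parts that shrink by more than one, and it has not been written down). As stated, the proposal is a correct plan with the central analytic lemma left open.
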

	As an aside, we note that while Conjecture~\ref{conj:main} first appeared formally in \cite{johnston2022deranged} earlier this year, the conjecture has been around for some time now.  Indeed, Kayll first wrote about this in his Fullbright proposal in 2014, and we first heard about Conjecture~\ref{conj:main} in 2017 when Palmer spoke about it at a conference.

	Some special cases of Conjecture~\ref{conj:main} were proven in \cite{johnston2022deranged}.  In particular, the cases $r=3$ and $r$ linear in $n$ were proven using a mixture of inclusion-exclusion arguments and Tannery's theorem, and the case when $r=\Omega(n^\delta)$ for any fixed $\delta>0$ was solved using a powerful result of McLeod~\cite{mcleod2010asymptotic} which is a strengthening of a result of Bollob\'as~\cite{Bollobas}.   
	
	In this paper we give a self-contained proof which fully resolves Conjecture~\ref{conj:main} in a strong form.  Recall that for $X,Y$ integer random variables, we define their \textit{total variation distance} by \[\TV(X,Y)=\frac{1}{2}\sum_{k\in \Z} |\Pr(X=k)-\Pr(Y=k)|.\]
	
	\begin{thm}\label{thm:strongmain}
		Let $r=r(n)\ge 2$ be integer valued and divide $2n$, and let $M_n$ be a sequence of perfect matchings in $K_{r\times 2n/r}$.  Let $R_n$ be a uniform random perfect matching on $K_{r\times 2n/r}$ and $X=X(M_n)$ the number of edges shared by~$M_n$ and~$R_n$. Then 
		\[ \TV\lrpar{X,\Po\lrpar{\frac{r}{2r-2}}}\to 0,\]
		as $n$ tends to infinity, where $\Po(\lambda)$ is the Poisson distribution with parameter $\lambda$.
	\end{thm}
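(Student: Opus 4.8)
The plan is to compute the factorial moments $\E[(X)_k]:=\E\bigl[X(X-1)\cdots(X-k+1)\bigr]$ and to show that for each fixed $k\ge 1$ one has $\E[(X)_k]=(r/(2r-2))^{k}+o(1)$, with the error uniform over admissible $r$ as $n\to\infty$. This suffices for the theorem: by the standard method-of-moments criterion, if $\E[(X_n)_k]\to\mu^k$ for every fixed $k$ then $X_n\to\Po(\mu)$ in distribution, and since $X_n$ and $\Po(\mu)$ are supported on $\Z_{\ge 0}$, Scheff\'e's lemma upgrades this to $\TV(X_n,\Po(\mu))\to0$. To handle the fact that $\lambda_n:=r(n)/(2r(n)-2)\in(1/2,1]$ may move with $n$, pass to a subsequence along which either $r(n)$ is eventually equal to a fixed constant $r^*$, or $r(n)\to\infty$ and hence $\lambda_n\to 1/2$; in either case a limiting parameter $\mu$ exists, the above yields $\TV(X,\Po(\mu))\to 0$, and as $\TV(\Po(\lambda_n),\Po(\mu))\le|\lambda_n-\mu|\to 0$ we conclude $\TV(X,\Po(\lambda_n))\to0$. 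The full statement then follows by the subsequence principle.

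Write $G=K_{r\times m}$ with $m:=2n/r$; since $G$ has $2n$ vertices, $\abs{M_n}=n$. As $M_n$ is a matching, its $k$-subsets $S$ are exactly the $k$-sets of pairwise disjoint shared edges, and since perfect matchings of $G$ containing $S$ correspond to perfect matchings of $G-V(S)$,
\[\E[(X)_k]=k!\sum_{S\subseteq M_n,\ \abs S=k}\f{\PM(G-V(S))}{\PM(G)},\]
where $G-V(S)$ is the complete multipartite graph whose $i$th part has size $m-d_i(S)$, $d_i(S)$ denoting the number of $S$-vertices in part $i$; note $\sum_i d_i(S)=2k$ and $d_i(S)\le\min(k,m)$. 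Two exact identities drive the estimate. First, fixing a vertex $v$ in part $1$ and splitting over its partner---and using that the parts $2,\dots,r$ are interchangeable by an automorphism of $G$ fixing part~$1$---gives $\PM(G)=(r-1)m\cdot\PM(G-V(e))$ for any edge $e$; hence $\Pr(e\in R_n)=((r-1)m)^{-1}$ and already $\E[X]=n/((r-1)m)=r/(2r-2)$ \emph{exactly}, for all $n$ and $r$. Second, substituting the Gaussian identity $\exp\!\lrpar{\sum_{i<j}x_ix_j}=\tfrac1{\sqrt{2\pi}}\int_{\mathbb R}e^{-s^2/2}\prod_i e^{sx_i-x_i^2/2}\,ds$ into the standard formula $\PM(K_{m_1,\dots,m_r})=\prod_i m_i!\cdot[x_1^{m_1}\cdots x_r^{m_r}]\exp\!\lrpar{\sum_{i<j}x_ix_j}$ and extracting coefficients using $e^{sx-x^2/2}=\sum_{m\ge0}\mathrm{He}_m(s)\,x^m/m!$ (probabilists' Hermite polynomials) yields the clean exact formula
\[\PM(K_{m_1,\dots,m_r})=\f1{\sqrt{2\pi}}\int_{\mathbb R}e^{-s^2/2}\prod_{i=1}^r\mathrm{He}_{m_i}(s)\,ds=\E\!\left[\prod_{i=1}^r\mathrm{He}_{m_i}(Z)\right],\qquad Z\sim N(0,1),\]
consistent with $\PM(K_{n,n})=\E[\mathrm{He}_n(Z)^2]=n!$ and $\PM(K_{2,2,2})=\E[(Z^2-1)^3]=8$.

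Everything now reduces to the following estimate, which I regard as the heart of the matter: for each fixed $k$, uniformly over admissible $r$ and over all $\mathbf d=(d_1,\dots,d_r)\in\Z_{\ge0}^r$ with $\sum_i d_i=2k$ and $d_i\le\min(k,m)$,
\[\f{\PM(K_{m-d_1,\dots,m-d_r})}{\PM(K_{m,\dots,m})}=\bigl(1+o(1)\bigr)\bigl((r-1)m\bigr)^{-k}.\]
Given this, $\E[(X)_k]=k!\binom nk\bigl((r-1)m\bigr)^{-k}(1+o(1))=\bigl(n/((r-1)m)\bigr)^k(1+o(1))=(r/(2r-2))^k(1+o(1))$, completing the proof. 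To prove the estimate I would use $\mathrm{He}_{m-d}=\tfrac{(m-d)!}{m!}\,\mathrm{He}_m^{(d)}$: writing $T=\{i:d_i>0\}$ and $s=\abs T\le 2k$, the numerator equals $\bigl(\prod_{i\in T}\tfrac{(m-d_i)!}{m!}\bigr)\,\E\!\bigl[\mathrm{He}_m(Z)^{r-s}\prod_{i\in T}\mathrm{He}_m^{(d_i)}(Z)\bigr]$, the prefactor is $m^{-2k}(1+o(1))$, and it remains to show that the Gaussian expectation is $(m/(r-1))^{k}\,\E[\mathrm{He}_m(Z)^r](1+o(1))$. Writing both as integrals against $e^{-z^2/2}$ and applying a Laplace/saddle-point analysis, the integral $\int e^{-z^2/2}\mathrm{He}_m(z)^r\,(\cdots)\,dz$ is governed by the point $z^*$ where $r\log|\mathrm{He}_m(z)|-z^2/2$ is maximal, i.e.\ $z^*=rm\,\mathrm{He}_{m-1}(z^*)/\mathrm{He}_m(z^*)$; inserting this into the three-term recurrence $\mathrm{He}_m=z\,\mathrm{He}_{m-1}-(m-1)\mathrm{He}_{m-2}$ gives $\mathrm{He}_m(z^*)/\mathrm{He}_{m-1}(z^*)\sim\sqrt{(r-1)m}$, whence $\mathrm{He}_m^{(d)}(z^*)/\mathrm{He}_m(z^*)\sim\bigl(m\,\mathrm{He}_{m-1}(z^*)/\mathrm{He}_m(z^*)\bigr)^d\sim(m/(r-1))^{d/2}$, and the ratio of expectations tends to $\prod_{i\in T}(m/(r-1))^{d_i/2}=(m/(r-1))^{k}$.

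The main obstacle is carrying out this last analysis rigorously and \emph{uniformly} across the entire range $2\le r\le 2n$, since the location of $z^*$ drifts from just past the turning point $\sqrt{4m}$ of $\mathrm{He}_m$ (when $r$ is a small constant) out to $z^*\asymp\sqrt{2n}$, deep in the tail (when $r$ is large). I would split into regimes: $r=2$ is elementary, since there $\mathbf d=(k,k)$ is forced and $\PM(K_{n-k,n-k})/\PM(K_{n,n})=(n-k)!/n!\sim n^{-k}$; for bounded $r\ge 3$ one works near the Hermite turning point using Plancherel--Rotach/Airy-type asymptotics; for $r,m\to\infty$ simultaneously one controls the saddle point out in the tail; and for $m$ bounded with $r\to\infty$, $\mathrm{He}_m$ is a fixed polynomial and a direct Laplace estimate recovers the $K_{2n}$ computation (hence the limit $1/2$). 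Patching these estimates into a single uniform bound is the technically heaviest part of the argument. A viable alternative to the full factorial-moment computation is the Chen--Stein method applied to $X=\sum_{e\in M_n}\mathbf 1[e\in R_n]$, which requires only the ratio estimate with $\sum_i d_i\le4$ (via a second-moment computation) together with control of the conditional expectations $\E\bigl[\mathbf 1[e\in R_n]\bigm|(\mathbf 1[f\in R_n])_{f\neq e}\bigr]$; but the analytic core---estimating $\PM(K_{m\cdot\mathbf 1-\mathbf d})/\PM(K_{m\cdot\mathbf 1})$---is the same either way.
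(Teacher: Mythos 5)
Your approach---method of moments via the exact Hermite-integral representation of $\PM(K_{m_1,\dots,m_r})$---is genuinely different from the paper's. The paper instead works with $\N_k$, the set of perfect matchings sharing exactly $k$ edges with $M_n$, and uses a switching argument (pairing $P\in\N_k$ with $Q\in\N_{k-1}$ via a three-edge rotation) to show $|\N_k|/|\N_{k-1}|=(1+o(1))\tfrac{r}{(2r-2)k}$, from which the TV estimate follows by an elementary tail argument (Proposition~\ref{prop:main}). The reductions in your proposal are correct as far as they go: the factorial-moment identity, the exact $\E[X]=r/(2r-2)$, and the formula $\PM(K_{m_1,\dots,m_r})=\E[\prod_i\mathrm{He}_{m_i}(Z)]$ all check out (and the sanity checks on $K_{n,n}$ and $K_{2,2,2}$ are right).

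However, there is a genuine gap: the entire difficulty of the problem has been compressed into the unproved claim
\[
\frac{\PM(K_{m-d_1,\dots,m-d_r})}{\PM(K_{m,\dots,m})}=\bigl(1+o(1)\bigr)\bigl((r-1)m\bigr)^{-k},
\]
uniformly over $2\le r\le 2n$ and all admissible $\mathbf d$ with $\sum_id_i=2k$. What you offer toward it is a saddle-point heuristic: locate $z^*$ from $r\,m\,\mathrm{He}_{m-1}(z^*)/\mathrm{He}_m(z^*)=z^*$, feed this into the three-term recurrence assuming $\mathrm{He}_{m-1}/\mathrm{He}_{m-2}\approx\mathrm{He}_m/\mathrm{He}_{m-1}$ at $z^*$, deduce $\mathrm{He}_m(z^*)/\mathrm{He}_{m-1}(z^*)\approx\sqrt{(r-1)m}$, and compare integrands pointwise at $z^*$. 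None of this is justified: the near-equality of consecutive Hermite ratios at $z^*$, the localization of the Gaussian integral at $z^*$, the insensitivity of the second-order (width) factor to replacing $\mathrm{He}_m^{\,r}$ by $\mathrm{He}_m^{\,r-s}\prod\mathrm{He}_m^{(d_i)}$, and above all the \emph{uniformity} over the full range of $r$ --- from $r=3$ (where $z^*$ sits in the Plancherel--Rotach/Airy transition region just outside the last zero of $\mathrm{He}_m$, and classical tail asymptotics do not apply) out to $r\asymp n$ (where $m$ is bounded and the problem degenerates) --- are each nontrivial, and you explicitly flag them as open. As written this is a research programme, not a proof; to turn it into one you would need to establish the ratio estimate rigorously in each regime and glue the regimes, and it is far from clear that this is lighter than the paper's combinatorial switching argument, which sidesteps Hermite asymptotics entirely.

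Two smaller points. First, $S(P)\notin\text{(admissible range)}$ is not an issue here, but note that $d_i$ can equal $k$ (all $k$ chosen edges meeting a common part), so the deleted part sizes are not always within $O(1)$ of balanced; your uniformity claim over $\mathbf d$ must cover that. Second, the subsequence + Scheff\'e reduction of the $\TV$ statement to factorial-moment convergence is fine, as is the bound $\TV(\Po(\lambda),\Po(\mu))\le|\lambda-\mu|$, so the framing around the key estimate is sound.
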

	
	Note that $\Pr(X=0)$ is exactly $\f{\PM(K_{r\times 2n/r}-M)}{\PM(K_{r\times 2n/r})}$, so the fact that $\Pr\lrpar{\Po\lrpar{\frac{r}{2r-2}}=0}=e^{-r/(2r-2)}$ gives Conjecture~\ref{conj:main}. Our methods also give the following result, which can be viewed as a generalization of Theorem~\ref{thm:strongmain} when $r$ tends to infinity.
	
	\begin{thm}\label{thm:larger}
		Let $G_n$ be a sequence of graphs such that $G_n$ has $2n$ vertices and has minimum degree $2n-o(n)$, and let $M_n\sub G_n$ be a sequence of perfect matchings.  Let $R_n$ be a uniform random perfect matching on $G_n$ and $X$ the number of edges shared by~$M_n$ and~$R_n$. Then 
     \[ \TV\lrpar{X,\Po\lrpar{\frac{1}{2}}}\to 0, \]
     as $n$ tends to infinity.
	\end{thm}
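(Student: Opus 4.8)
The plan is to compute the factorial moments of $X$ and apply the method of moments for Poisson convergence. Label $M_n=\{f_1,\dots,f_n\}$, so $X=\sum_{i=1}^n\mathbb{1}[f_i\in R_n]$, and for a set $S$ of edges of $M_n$ write $V(S)$ for the set of vertices it covers. Since $R_n$ is uniform we have $\Pr[S\subseteq R_n]=\PM(G_n-V(S))/\PM(G_n)$ (here $\PM(G_n)>0$ since $\delta(G_n)\ge n$ for large $n$ forces a perfect matching), and expanding $(X)_k=X(X-1)\cdots(X-k+1)$ as a sum over ordered $k$-tuples of distinct edges of $M_n$ gives
\[\E[(X)_k]=k!\sum_{S\subseteq M_n,\ |S|=k}\frac{\PM(G_n-V(S))}{\PM(G_n)}.\]
As $(1/2)^k$ is the $k$-th factorial moment of $\Po(1/2)$ and the Poisson law is determined by its moments, it suffices to prove $\E[(X)_k]\to(1/2)^k$ for each fixed $k$: this gives $X\to\Po(1/2)$ in distribution, which for $\Z$-valued random variables is equivalent to $\TV(X,\Po(1/2))\to 0$.

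The core is the following local estimate, to be established by switching arguments: \emph{if $H$ has $2m$ vertices with $\delta(H)\ge 2m-o(m)$, then for every edge $uv\in E(H)$ one has $\PM(H-u-v)/\PM(H)=(1+o(1))/(2m)$, uniformly in $uv$.} First I would prove the crude bound $\PM(H-u-v)=O(\PM(H)/m)$: given a perfect matching $N$ of $H$ with $uv\in N$, rotating $uv$ against another edge $xy\in N$ produces the perfect matchings $N-uv-xy+ux+vy$ and $N-uv-xy+uy+vx$ (those of them lying in $H$), all of which avoid $uv$; the degree hypothesis makes at least $(2-o(1))m$ of these $2(m-1)$ rotations legal, while each perfect matching avoiding $uv$ is obtained by at most one rotation (the partners of $u$ and $v$ determine it), so $(2-o(1))m\cdot\PM(H-u-v)\le\PM(H)$. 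Next, for two neighbours $v,w$ of $u$ I would compare $\PM(H-u-v)$ with $\PM(H-u-w)$ through the map $N\mapsto N-wz+vz$, where $z$ is the partner of $w$ in $N$; this is a bijection between the perfect matchings of $H-u-v$ in which $w$'s partner is a neighbour of $v$ and those of $H-u-w$ in which $v$'s partner is a neighbour of $w$. Since $v$ has only $o(m)$ non-neighbours and, by the crude bound applied to $H-u-v$, each choice of $w$'s partner accounts for at most $O(\PM(H-u-v)/m)$ matchings, the parts on which the map is not defined have size $o(\PM(H-u-v))$ on both sides, so $\PM(H-u-v)=(1+o(1))\PM(H-u-w)$ uniformly. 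Summing $\PM(H)=\sum_{w\sim u}\PM(H-u-w)$ over the $\deg(u)=2m-o(m)$ neighbours of $u$ then yields the estimate.

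To conclude, iterate. For fixed $k$, deleting at most $2k$ vertices from $G_n$ leaves a graph on $2(n-j)$ vertices (for the relevant $j\le k$) of minimum degree $2(n-j)-o(n-j)$, so for any $k$-subset $S$ of $M_n$, peeling off its edges one at a time gives
\[\frac{\PM(G_n-V(S))}{\PM(G_n)}=\prod_{j=0}^{k-1}\frac{1+o(1)}{2(n-j)}=(1+o(1))\,\frac{(n-k)!}{2^k\,n!},\]
the error uniform over $S$ because $k$ is constant. Substituting into the factorial-moment identity,
\[\E[(X)_k]=k!\binom{n}{k}(1+o(1))\frac{(n-k)!}{2^k\,n!}=\frac{1+o(1)}{2^k}\longrightarrow\Big(\tfrac12\Big)^{k},\]
as desired.

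I expect the main obstacle to be the local estimate, and within it the comparison step: one must set up the ``swap $v$ for $w$'' switching carefully and argue that the matchings on which it fails are negligible, which is exactly the role of the crude upper bound $\PM(H-u-v)=O(\PM(H)/m)$. The rest is routine bookkeeping, though one should check that every $o(1)$ is uniform in the relevant parameters so that the $k$-fold iteration does not accumulate error.
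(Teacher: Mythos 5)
Your proof is correct, and it takes a genuinely different route from the paper's. The paper proves Theorem~\ref{thm:larger} by feeding two ingredients into the general Proposition~\ref{prop:main}: the ratio estimate $|\N_k|/|\N_{k-1}|=(1+o(1))\cdot\frac{1}{2k}$ for all $k=o(n)$ (Lemma~\ref{lem:switchLarge}, via a three-edge rotation between $\N_k$ and $\N_{k-1}$), and the edge-probability bound $\Pr(e\in R_n)=O(1/n)$ (via a two-edge rotation); Proposition~\ref{prop:main} then combines these with Markov's inequality and a truncation to pass to total variation. You instead compute factorial moments $\E[(X)_k]$ directly for each fixed $k$ and invoke the method of moments, reducing everything to the single local estimate $\PM(H-u-v)/\PM(H)=(1+o(1))/(2m)$. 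To prove that estimate you use the same two-edge rotation as the paper's edge-probability step as a crude upper bound, but then add a distinct, lighter-weight ``swap $v$ for $w$'' bijection to show $\PM(H-u-v)\approx\PM(H-u-w)$ uniformly over $w\sim u$, and sum over $w$. This is a cleaner and more self-contained argument for this particular theorem: it avoids the explicit $\N_k$-ratio machinery entirely and only needs the estimates at fixed $k$. What the paper's route buys is uniform control of $|\N_k|/|\N_{k-1}|$ out to $k=o(n)$, which is the scale needed to make the same framework carry the harder fixed-$r$ case (Theorem~\ref{thm:strongmain}), where a direct factorial-moment computation would be less pleasant. Your closing caveat about uniformity of the $o(1)$'s is exactly right and is satisfied: the error in the local estimate is $O(t/m)$ where $t$ is the degree deficiency, and deleting $2j\le 2k$ vertices changes $(t,m)$ to $(t+2j,\,m-j)$, so for fixed $k$ the error remains $o(1)$ uniformly over the $\binom{n}{k}$ sets $S$.
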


	\textbf{Notation}.  We typically use $P,Q$ to denote generic perfect matchings, $R$ a uniformly random perfect matching, and $M$ the perfect matching that we are deleting.  If $P$ is a perfect matching and $x$ is a vertex of a graph $G$, we define $P(x)$ to be the unique vertex $y\in V(G)$ such that $xy\in P$.  We write $\N_\ell(G,M)$ to denote the set of perfect matchings of $G$ which contain exactly $\ell$ edges of $M$, and we denote this set simply by $\N_\ell(G,M)$ whenever $G,M$ are clear from context.  Note that $\PM(G)=\sum_{\ell\ge 0} |\N_\ell|$ and $\PM(G-M)=|\N_0|$.  
	
	If $v$ is a vector we let $|v|_1=\sum_i |v_i|$ and $|v|_\infty=\max_i |v_i|$.  Given two functions $f,g$ depending on some parameter $n$, we write $f(n)=O(g(n))$ to mean there exists an absolute constant $C>0$ such that $|f(n)|\le C|g(n)|$ for all $n$.  We emphasize that $C$ is allowed to depend on $r$ whenever we declare $r$ to be a fixed constant.  We also emphasize that expressions like $1+O(n^{-1})$ and $1-O(n^{-1})$ have the same meaning, and we will utilize whichever form seems more natural in a given setting.    We write $f(n)=o(g(n))$ if $f(n)/g(n)$ tends to 0 as $n$ tends to infinity. We write $\alpha=\beta\pm \gamma$ if $\alpha\in [\beta-\gamma,\beta+\gamma]$.

	\section{Proof Sketch}\label{sec:sketch}
	Our proof uses techniques known as ``switching'' arguments, which have been used to obtain precise estimates on the number of various combinatorial objects, see for example \cite{gao2016enumeration,godsil1990asymptotic, mckay2010subgraphs, mckay1991asymptotic}.  A general version of this technique can be found in Hasheminezhad and McKay~\cite{hasheminezhad2010combinatorial}, and a simple example (which is spiritually similar to the argument we pursue in this paper) can be found in an answer of Brendan McKay's to a question of Terence Tao on MathOverflow~\cite{86202}.
	We now sketch the argument we use for proving Conjecture~\ref{conj:main} when $r$ tends towards infinity (which is very similar to the argument used to prove Theorem~\ref{thm:larger}).

	By definition of $\N_\ell=\N_\ell(K_{r\times 2n/r},M)$ being the set of perfect matchings of $K_{r\times 2n/r}$ which contain exactly $\ell$ edges of $M$, we have
	\[\f{\PM(K_{r\times 2n/r})}{\PM(K_{r\times 2n/r}-M)}=\sum_{\ell\ge 0} \f{|\N_\ell|}{|\N_0|}=\sum_{\ell\ge 0} \prod_{k=1}^{\ell} \f{|\N_{k}|}{|\N_{k-1}|}.\]
	With this, we will be done if we can show $\f{|\N_{k}|}{|\N_{k-1}|}\approx \f{1}{2k}$ for all $k$, as this implies that the expression above is approximately
	\[\sum_{\ell\ge 0}\frac{1}{2^\ell\ell!}=e^{1/2},\]
	giving the result.

	To establish bounds on $\f{|\N_{k}|}{|\N_{k-1}|}$, we construct an auxillary bipartite graph $H$ on $\N_{k}\cup \N_{k-1}$ with the property that $\deg_H(P)\approx 2k (2n)^2$ for all $P\in \N_{k}$ and $\deg_H(Q)\approx (2n)^2$ for all $Q\in \N_{k-1}$.  If this holds, then
	\[2k (2n)^2 |\N_{k}|\approx \sum_{P\in \N_{k}}\deg_H(P)=e(H)=\sum_{Q\in \N_{k-1}}\deg_H(Q)\approx (2n)^2|\N_{k-1}|,\]
	and rearranging gives the desired bound.
	
	It remains to construct a bipartite graph $H$ on $\N_{k}\cup \N_{k-1}$ which has the desired degrees.  We do this by making each $P\in \N_{k}$ adjacent to every $Q\in \N_{k-1}$ which can be obtained from $P$ by selecting three edges $ax,by,cz\in P$ and ``rotating'' their endpoints (i.e.\ by deleting these edges and replacing them with $bx,cy,az$), see Figure~\ref{fig:my_label} for an example.  Simple counting arguments will then show that each element of $\N_{k}\cup \N_{k-1}$ has roughly their desired degree.  From this we can prove Conjecture~\ref{conj:main} when $r$ tends towards infinity.
	
	The approach for fixed $r$ is similar but requires some extra difficulties.  In particular, we will not be able to guarantee that every $Q\in \N_{k-1}$ has roughly the degree we are looking for. We overcome this obstacle by studying the behavior of typical perfect matching of $K_{r\times 2n/r}$ (see Proposition~\ref{prop:balancedswitch}) to show that ``most" $Q$ have the correct degrees, and with this we can prove the result.

	\section{Proof of Results}
	We break our proofs into three parts.  First we prove effective estimates on $|\N_k|/|\N_{k-1}|$, where the case of~$r$ fixed for Theorem~\ref{thm:strongmain} is proven under the assumption of a technical result Proposition~\ref{prop:balancedswitch}.  We then show how these ratio estimates imply Theorems~\ref{thm:strongmain} and \ref{thm:larger}.  Finally, we prove Proposition~\ref{prop:balancedswitch}.
	
	\subsection{Ratio Estimates}
	We first consider the case when $G$ has large minimum degree.
	
	\begin{lemma}\label{lem:switchLarge}
		Let $G$ be a $2n$-vertex graph with minimum degree $2n-t$, and let $M\sub G$ be a perfect matching. If $k+t=o(n)$, then
		\[ \frac{|\N_{k}|}{|\N_{k-1}|}=\left(1+o(1)\right)\f{1}{2k}. \]
	\end{lemma}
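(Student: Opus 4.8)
The plan is to run the switching argument sketched in Section~\ref{sec:sketch}. First I would introduce an auxiliary bipartite graph $H$ between $\N_k$ and $\N_{k-1}$, joining $P\in\N_k$ to $Q\in\N_{k-1}$ (with appropriate multiplicities) whenever $Q$ is obtained from $P$ by a $3$-rotation: one deletes three edges $e_1,e_2,e_3\in P$ and replaces them by a different perfect matching of $G$ on $V(e_1)\cup V(e_2)\cup V(e_3)$. The whole lemma then reduces to the two \emph{uniform} degree estimates
\[ \deg_H(P)=(1+o(1))\,2k\,(2n)^2 \quad\text{for }P\in\N_k, \qquad \deg_H(Q)=(1+o(1))\,(2n)^2 \quad\text{for }Q\in\N_{k-1}, \]
since then double counting $e(H)=\sum_{P\in\N_k}\deg_H(P)=\sum_{Q\in\N_{k-1}}\deg_H(Q)$ gives $2k\,|\N_k|=(1+o(1))|\N_{k-1}|$, which is the claim. (Both sides are nonzero for $n$ large: the minimum-degree hypothesis forces $|\N_0|\ge 1$ via a Dirac-type argument applied to $G-M$, and removing $j$ disjoint edges of $M$ and completing arbitrarily in the remaining graph minus $M$ shows $|\N_j|\ge 1$ for each fixed $j$.)

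The one structural input I would isolate first is this: if $P\in\N_k$, $Q\in\N_{k-1}$ and $P\triangle Q$ is a single alternating $6$-cycle $C$, then $C$ meets $M$ in exactly one edge, and that edge lies in $P$. This is a short parity count on $C$. The edges of $P$ and of $Q$ lying outside $C$ coincide, so if $k_0$ of them are in $M$ then $|M\cap P\cap E(C)|=k-k_0$ while $|M\cap Q\cap E(C)|=k-1-k_0$, a difference of exactly $1$; but $M\cap E(C)$ is a matching contained in a $6$-cycle, hence has at most $3$ edges, and its two parts, in the two colour classes $P\cap E(C)$ and $Q\cap E(C)$, can differ by $1$ only when $M\cap E(C)$ is a single edge lying in $P\cap E(C)$ — a $3$-edge matching in $C_6$ would have to be one whole colour class. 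Consequently every rotation relevant to $H$ destroys exactly one edge of $M$ and creates none; equivalently, among $e_1,e_2,e_3$ exactly one lies in $M$ and none of the three replacement edges does.

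Granting this, the forward count is routine: a neighbour of $P\in\N_k$ is produced by choosing the edge of $M\cap P$ to destroy ($k$ ways), choosing the other two edges to rotate ($\Theta(n^2)$ ways), and choosing an admissible replacement matching on the six endpoints ($O(1)$ ways), which tallies to $(1+o(1))\,2k(2n)^2$. This is where $k+t=o(n)$ is used, to absorb the two families of rotations that must be discarded: those producing an edge not in $G$ — at most $O(t)$ per choice of the other two edges, since $G$ has minimum degree $2n-t$ — and those creating a second edge of $M$ among the six endpoints — at most $O(k)$ per choice; together these are an $O((k+t)/n)=o(1)$ fraction of the main term. The backward count is the mirror image: a neighbour out of $Q\in\N_{k-1}$ is obtained by choosing which of the $\approx n$ edges of $M$ missing from $Q$ to create — forcing two of the three rotated edges to be the $Q$-edges at its two endpoints — then choosing the third edge ($\Theta(n)$ ways) and an admissible replacement, yielding $(1+o(1))(2n)^2$ after discarding the same two families.

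I expect the main difficulty to be one of care rather than of ideas: the two degree estimates must hold for \emph{every} $P$ and \emph{every} $Q$, and this is exactly the step that fails for $K_{r\times 2n/r}$ with $r$ fixed, where the role of ``$t$'' is played by $2n/r=\Theta(n)$ and one must invoke Proposition~\ref{prop:balancedswitch} to see that most $Q$ are well-behaved. Under the hypothesis $t=o(n)$ no such auxiliary input is needed: the ``edge leaves $G$'' loss is uniformly negligible, so the entire burden is the bookkeeping of the rotations together with the uniform check that the two bad families contribute only an $o(1)$ fraction. With the two uniform degree bounds in place, the double count of $e(H)$ finishes the proof.
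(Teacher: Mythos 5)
Your proposal is correct and follows essentially the same switching argument as the paper: build an auxiliary bipartite graph between $\N_k$ and $\N_{k-1}$ via $3$-rotations, establish the uniform degree estimates $\deg_H(P)\approx 2k(2n)^2$ and $\deg_H(Q)\approx (2n)^2$ using the minimum degree hypothesis to absorb the forbidden choices, and double count $e(H)$. Your parity argument on the alternating $6$-cycle $P\triangle Q$ is an equivalent reformulation of the paper's definition of a \emph{good} triple $(x,y,z)$ (which demands $xP(x)\in M$ and $yP(y),zP(z),yP(z)\notin M$, the other two new edges being automatically outside $M$ since $x,P(x)$ are already $M$-saturated by $xP(x)$).
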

	\begin{proof}
		For a triple of distinct vertices $(x,y,z)$ and a perfect matching $P$ of $G$, define the $(x,y,z)$-switch of $P$ to be the set of pairs \[P\setminus\{xP(x),yP(y),zP(z)\}\cup \{xP(y),yP(z),zP(x)\},\]  see Figure~\ref{fig:my_label} for an example.  Note that the $(x,y,z)$-switch may not be a subgraph of $G$ if, say, $P(y)$ is not adjacent to $x$.
		\begin{figure}
			\centering
   \begin{tikzpicture}
	\begin{pgfonlayer}{nodelayer}
		\node [style=black] (0) at (-2.5, 1.5) {};
		\node [style=black] (1) at (0, 1.5) {};
		\node [style=black] (2) at (2.5, 1.5) {};
		\node [style=black] (3) at (2.5, 0) {};
		\node [style=black] (4) at (0, 0) {};
		\node [style=black] (5) at (-2.5, 0) {};
		\node [style=none] (6) at (-2.5, 1.75) {$P(x)=Q(z)$};
		\node [style=none] (7) at (0, 1.75) {$P(y)=Q(x)$};
		\node [style=none] (8) at (2.5, 1.75) {$P(z)=Q(y)$};
		\node [style=none] (9) at (-2.5, -0.25) {$x$};
		\node [style=none] (10) at (0, -0.25) {$y$};
		\node [style=none] (11) at (2.5, -0.25) {$z$};
	\end{pgfonlayer}
	\begin{pgfonlayer}{edgelayer}
		\draw (0) to (5);
		\draw (1) to (4);
		\draw (2) to (3);
		\draw[dashed] (5) to (1);
		\draw[dashed] (4) to (2);
		\draw[dashed] (0) to (3);
	\end{pgfonlayer}
\end{tikzpicture}

			\caption{The good $(x,y,z)$-switch of a perfect matching $P$ replaces the three pairs connected by solid lines with the three pairs connected by dashed lines, obtaining the perfect matching $Q$.  This switch is good if all of the lines are edges of $G$ and if $xP(x)$ is the only one of these six edges that is in $M$.}
			\label{fig:my_label}
		\end{figure}
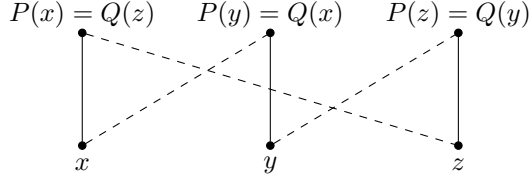
		We say that the triple $(x,y,z)$ is \textit{good} (with respect to $P$ and $M$) if the following hold: 
		\begin{enumerate}
			\item The $(x,y,z)$-switch of $P$ is a subgraph of $G$.  That is, $x\sim P(y),\ y\sim P(z)$, and $z\sim P(x)$.
			\item We have $xP(x)\in M$ and $ yP(y),zP(z),yP(z)\notin M$.
		\end{enumerate}		These conditions imply that if $Q$ is the $(x,y,z)$-switch of some $P\in \N_{k}$ with $(x,y,z)$ good, then $Q\in \N_{k-1}$.  Motivated by this, we define an auxiliary bipartite graph $H$ on $\N_{k}\cup \N_{k-1}$ by making $P\in \N_{k}$ adjacent to every $Q\in \N_{k-1}$ which is the $(x,y,z)$-switch of $P$ for some good triple $(x,y,z)$. 
		\begin{claim}
			Every $P\in \N_{k}$ has $\deg_H(P)=2k(2n-O(k+t))^2$.
		\end{claim}
		\begin{proof}
			Note that $\deg_H(P)$ is exactly the number of good triples $(x,y,z)$ with respect to $P$, so it suffices to count this quantity.  Every good triple can be formed by (i) picking some $x$ which is in an edge of $P\cap M$, (ii) picking some $y\ne x$ which is not a vertex in an edge of $P\cap M$ and such that $P(y)$ is a neighbor of $x$, and (iii) picking some $z\ne x,y$ which is not a vertex in an edge of $P\cap M$ and such that $P(z)$ is a neighbor of $y$, and $z$ is a neighbor of $P(x)$, and $yP(z)\notin M$. We note that these conditions imply that $x,y,z,P(x),P(y),P(z)$ are all distinct vertices.
			
			The number of choices for (i) is exactly $2k$.  The number of choices for each of (ii) and (iii) is certainly at most $2n$ and it is at least $2n-2k-2t-4$ (since, for example, in choosing $z$ one only has to avoid the $2k$ vertices in an edge of $P\cap M$, the $t$ vertices which are not a neighbor of $P(x)$, the $t$ vertices $z$ with $P(z)$ not a neighbor of $y$, and each of the vertices $x,y,M(y)$).  Multiplying the number of choices gives the result.
		\end{proof}
		\begin{claim}
			Every $Q\in \N_{k-1}$ has $\deg_H(Q)=(2n-O(k+t))^2$.
		\end{claim}
		\begin{proof}
			Call a triple of distinct vertices $(x,y,z)$ \textit{reverse good} with respect to $Q$ if $x\sim Q(z),y\sim Q(x),z\sim Q(y)$ and if we have $xQ(z)\in M$ and $yQ(x),zQ(y),yQ(y)\notin M$.  Note that $(x,y,z)$ is reverse good if and only if $Q$ is the $(x,y,z)$-switch of some $P\in \N_k$ with $(x,y,z)$ good (since $Q(x)=P(y),\ Q(y)=P(z),\ Q(z)=P(x)$).  Thus it suffices to count the number of reverse good $(x,y,z)$.
			
			To form such a triple, we (i) pick $x$ which is not in an edge of $Q\cap M$, (ii) let $z=Q(M(x))$, (iii) pick~$y\neq x,z$ which is not in an edge of $Q\cap M$ and such that $Q(y)$ is a neighbour of $z$, and $y$ is a neighbor of $Q(x)$,  and $yQ(x)\notin M$. It is not difficult to see that the output form this procedure will always be reverse good, and that every reverse good triple arises uniquely from this process.  Thus it suffices to bound the number of choices at each step.  For (i) this is $2n-2k$, (ii) is 1 choice, and (iii) is at most $2n$ and at least $2n-2k-2t-4$.  Multiplying these out gives the result.

		\end{proof}
		These two claims and the assumption $k+t=o(n)$ imply
		\[(1+o(1))8kn^2|\N_k|=\sum_{P\in \N_k}\deg_H(P)=e(H)=\sum_{Q\in \N_{k-1}}\deg_H(Q)=(1+o(1))4n^2|\N_{k-1}|,\]
		giving the result.
	\end{proof}
	The case for $G=K_{r\times 2n/r}$ with $r$ fixed is harder and requires the following result, which roughly says that most perfect matchings $P$ of $K_{r\times 2n/r}$ behave ``as expected''.  Here and throughout we let $V_1\cup \cdots\cup V_r$ denote the $r$-partition of $K_{r\times 2n/r}$.

	\begin{prop}\label{prop:balancedswitch}
		Let $r\ge 2$ fixed, $n$ such that $r$ divides $2n$, and $M$ an arbitrary perfect matching on $G=K_{r\times 2n/r}$. Let $1\le k\le\log n$ and let $Q$ be a uniform random perfect matching on $G$ that uses exactly $k-1$ edges of $M$. Then the following holds with high probability: for any $i\in [r]$, the number of vertices $x\in V_i$ such that $Q(M(x))\in V_i$ is $\frac{2n}{r(r-1)}+O(\sqrt{n\log n})$.
	\end{prop}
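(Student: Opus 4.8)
The plan is to establish the bound for each fixed $i\in[r]$ and take a union bound over the boundedly many parts at the end; throughout, $Z_i$ denotes the quantity in the statement. The case $r=2$ is degenerate, since then $Q(M(x))$ lies in the unique part other than the one containing $M(x)$, which is $V_i$, so $Z_i=\tfrac{2n}{r}=\tfrac{2n}{r(r-1)}$ identically; from now on assume $r\ge 3$.

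\textbf{Reductions.} Every $Q\in\N_{k-1}$ is specified by the set $E\sub M$ of $k-1$ $M$-edges it uses together with a perfect matching of $G-V(E)$ avoiding $M\setminus E$, so a uniform $Q\in\N_{k-1}$ is a mixture, over choices of $E$, of uniform perfect matchings of $G-V(E)$ that avoid the perfect matching $M\setminus E$ of $G-V(E)$. As $|V(E)|=2(k-1)=O(\log n)$, the graph $G-V(E)$ is complete $r$-partite with parts of size $\tfrac{2n}{r}-O(\log n)$, and the target value moves by only $O(\log n)=o(\sqrt{n\log n})$; hence it suffices to prove, uniformly over all complete $r$-partite $G'$ with parts of size $\tfrac{2n}{r}-O(\log n)$ and all perfect matchings $M'$ of $G'$: if $Q'$ is a uniform $M'$-avoiding perfect matching of $G'$, then $Z_i=\tfrac{2n}{r(r-1)}+O(\sqrt{n\log n})$ with high probability. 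Finally, a standard estimate (inclusion--exclusion, using $\Pr(e\in Q')=O(1/n)$ for a uniform perfect matching of a graph of minimum degree $|V|-o(|V|)$) shows that for $r\ge 3$ the number of $M'$-avoiding perfect matchings of $G'$ is $\Omega(\PM(G'))$, so conditioning on avoiding $M'$ costs only a bounded factor in every probability; it is therefore enough to treat $Q'$ a uniform perfect matching of $G'$ with no constraint.

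\textbf{Conditioning on the type.} For a uniform perfect matching $Q'$ of $G'$ I would record its \emph{type} $\tau=(q_{ab})_{a<b}$, where $q_{ab}$ is the number of edges of $Q'$ between $V_a(G')$ and $V_b(G')$. Counting perfect matchings of a fixed feasible type gives that $\tau$ has mass $\propto 1/\prod_{a<b}q_{ab}!$ on the lattice points of the transportation polytope $\{q\ge 0:\sum_{b\ne a}q_{ab}=|V_a(G')|\text{ for all }a\}$, which is exactly the law of independent Poisson variables with suitable means conditioned on these linear equations. Standard concentration for conditioned products of independent variables then shows that with high probability every $q_{ab}$ equals $\tfrac{2n}{r(r-1)}+O(\sqrt{n\log n})$ (the conditional means are pinned down by the polytope and, for near-equal parts, are all $\tfrac{2n}{r(r-1)}+O(\log n)$); call such a $\tau$ \emph{good}. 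Conditioned on $\tau$, one generates $Q'$ by choosing, independently for each part $V_a(G')$, a uniformly random partition of its vertices into classes of sizes $(q_{ab})_{b\ne a}$, and then a uniformly random bijection within each pair-class; in particular, for each $j\ne i$ the set of vertices of $V_j(G')$ matched into $V_i(G')$ is a uniformly random $q_{ij}$-subset of $V_j(G')$, and these sets are independent over $j$.

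\textbf{Finishing and the main obstacle.} Writing $m_{ij}$ for the number of $M'$-edges between $V_i(G')$ and $V_j(G')$, the previous paragraph gives $Z_i=\sum_{j\ne i}Y_j$ with the $Y_j$ independent and $Y_j\sim\mathrm{Hypergeometric}(|V_j(G')|,q_{ij},m_{ij})$. A sum of independent hypergeometrics is sub-Gaussian with variance proxy at most $\sum_{j}m_{ij}=|V_i(G')|=O(n)$ (each vertex of $V_i(G')$ lies in one $M'$-edge), and conditioned on a good $\tau$ its mean $\sum_{j}m_{ij}q_{ij}/|V_j(G')|$ equals $\tfrac{2n}{r(r-1)}+O(\sqrt{n\log n})$; so for $C$ large, $\Pr\big(|Z_i-\tfrac{2n}{r(r-1)}|>C\sqrt{n\log n}\mid\tau\big)=n^{-\Omega(C^2)}$ for every good $\tau$. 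Combining this with the tail bound on $\tau$, and then undoing the two reductions (a bounded factor, then a supremum over $E$), proves the proposition. I expect the one genuinely substantive point to be the concentration of the type $\tau$ in the middle step; this is also the conceptual heart of the matter, because $Z_i$ is \emph{not} a function of $\tau$, so one must route through the conditional hypergeometric description rather than use type-concentration directly. The reductions and the final sub-Gaussian estimate are routine.
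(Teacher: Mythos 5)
Your overall plan matches the paper's: decondition on the support $E$ of $Q\cap M$, pass to the unrestricted uniform perfect matching of a near-balanced complete $r$-partite graph at bounded cost, prove that the type vector $\tau=(q_{ab})$ concentrates near the balanced center, and finish with a hypergeometric tail bound conditional on a good type. But two of your steps are asserted as standard when they are in fact the technical substance. First, you justify $\Pr(\text{$Q'$ avoids $M'$})=\Omega(1)$ by ``inclusion--exclusion using $\Pr(e\in Q')=O(1/n)$.'' A first-moment computation gives $\E X=O(1)$, but this bounds $\Pr(X\ge 1)$ from above and does not yield an $\Omega(1)$ \emph{lower} bound on $\Pr(X=0)$ without higher-moment input, which is essentially the content of the theorem itself. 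The statement is true (it can be extracted from Lemma~\ref{lem:switchLarge} together with the Markov step of Proposition~\ref{prop:main}), but the paper's own route here is direct and bypasses switching entirely: it bounds the Radon--Nikodym derivative of the type-set law of $Q'$ against that of the unconditional $R$, showing $\Pr(S(Q')=S)\le\alpha^{\binom{r}{2}}\Pr(S(R)=S)$ for each $S$, where $\alpha=\sup_{N\ge 2}N!/d_N<\infty$ is a derangement ratio coming out of the product structure of $\PM(G',S)$. Since, as you yourself observe, $Z_i$ is not a function of the coarse type $\tau$ but $|M'_{ij}\cap Q'_{ij}|$ \emph{is} a function of the finer type sets $S(Q')$, this bound on the type-set law is exactly what is needed for the transfer.

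Second, and more seriously, the type concentration you gesture at as ``standard concentration for conditioned products of independent variables'' is the heart of the matter and is not available off the shelf. The mass $\propto\prod_{a<b}1/q_{ab}!$ on lattice points of a transportation polytope is not a textbook conditioned-Poisson setting with a ready-made $O(\sqrt{n\log n})$ subgaussian tail; Proposition~\ref{prop:balanced} proves exactly this, by tiling type space with cubes of side $\Theta(\sqrt{n\log n})$, using a translation argument (Lemmas~\ref{lem:nonempty} and~\ref{lem:densecenter}) to show no cube carries more polytope lattice points than the central region $\C$, and then (Lemmas~\ref{lem:optimise} and~\ref{lem:singleratio}) showing that every far cube's total weight is smaller than $\phi(\C)$ by a factor $n^{-\Omega(1)}$. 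You flag this as ``the one genuinely substantive point'' -- that is correct, and it would need to be carried out in full rather than cited. The remaining pieces (the $r=2$ triviality, the deconditioning on $E$, the independent-hypergeometric decomposition $Z_i=\sum_j Y_j$, and the final sub-Gaussian tail) are sound and align with the paper's argument.
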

	
	We defer the proof of Proposition~\ref{prop:balancedswitch} until Subsection~\ref{sec:balanced} and use it to prove the following.
	
	\begin{lemma} \label{lem:switching}
		For any fixed $r\ge 2$ and $k=o(n)$, we have
		\[ \frac{|\N_{k}|}{|\N_{k-1}|}=(1+o(1)) \frac{r}{(2r-2)k} \]
	\end{lemma}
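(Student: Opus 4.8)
The argument follows the switching scheme of Lemma~\ref{lem:switchLarge}, but for $G=K_{r\times 2n/r}$ the missing degree $t=2n/r$ is linear in $n$, so the hypothesis $k+t=o(n)$ fails and the two crude degree estimates used there are no longer accurate enough: the switching degrees must now be computed up to a $(1+o(1))$ factor. We build the same bipartite graph $H$ on $\N_k\cup\N_{k-1}$, joining $P\in\N_k$ to every $Q\in\N_{k-1}$ obtained from $P$ by an $(x,y,z)$-switch at a good triple, where ``good'' is defined exactly as before except that the three new pairs are additionally required to cross the $r$-partition of $G$ (e.g.\ the part of $z$ must differ from the part of $P(x)$). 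We still have $e(H)=\sum_{P\in\N_k}\deg_H(P)=\sum_{Q\in\N_{k-1}}\deg_H(Q)$, so it suffices to estimate the two sums.

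For a fixed $P\in\N_k$ there are exactly $2k$ choices for $x$, and $2n(1-1/r)-O(k)$ choices for $y$ (one avoids the $2n/r$ vertices $y$ with $P(y)$ in the part of $x$, plus $O(k)$ others), independently of $P$; but the number of choices for $z$ is $2n-4n/r+p_{ij}(P)-O(k)$, where $p_{ij}(P)$ is the number of edges of $P$ between the part containing $y$ and the part containing $P(x)$ --- the two ``wrong-part'' constraints on $z$ overlap in exactly $p_{ij}(P)$ vertices. Summing over $x,y$ gives $\deg_H(P)=(1+o(1))\,2k\,\big(a_rn^2-\phi(P)\big)$, with $a_r>0$ depending only on $r$ and $\phi(P)$ an explicit nonnegative function of the numbers of edges of $P$ between pairs of parts. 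A similar but more delicate count of reverse good triples for $Q\in\N_{k-1}$ --- there are $2n-O(k)$ choices for $x$, then $z=Q(M(x))$ is forced, then $2n-4n/r+q(Q)-O(k)$ choices for $y$, where $q(Q)$ counts edges of $Q$ between the part of $Q(x)$ and the part of $Q(M(x))$ --- gives $\deg_H(Q)=(1+o(1))\,\big(b_rn^2+\psi(Q)\big)$, with $b_r>0$ depending only on $r$ and $\psi(Q)$ an explicit function of the edge-between-parts counts of $Q$ and of the sets $W_a\cap M(W_b)$, where $W_a=\{x:Q(x)\in V_a\}$; thus only $\psi$ records the interaction between $Q$ and $M$.

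Next we invoke Proposition~\ref{prop:balancedswitch}, together with a routine concentration estimate showing that with high probability a uniform perfect matching of $K_{r\times 2n/r}$ using $o(n)$ edges of $M$ has all edge-between-parts counts equal to $\frac{2n}{r(r-1)}\pm O(\sqrt{n\log n})$: all but an $o(1)$-fraction of the matchings in $\N_k$ (and likewise in $\N_{k-1}$) are \emph{balanced}, meaning that every edge-between-parts count and every relevant interaction count equals $\frac{2n}{r(r-1)}$ up to $O(\sqrt{n\log n})$. For balanced $P$ this gives $\phi(P)=(1+o(1))\phi_0n^2$, hence $\deg_H(P)=(1+o(1))D'$ for an explicit $D'$ of order $kn^2$; for balanced $Q$ it gives $\psi(Q)=(1+o(1))\psi_0n^2$, hence $\deg_H(Q)=(1+o(1))D$ for an explicit $D$ of order $n^2$. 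Since every matching satisfies the trivial bounds $\deg_H(P)=O(kn^2)$ and $\deg_H(Q)=O(n^2)$, the unbalanced matchings contribute only $o(D'|\N_k|)$ to $\sum_P\deg_H(P)$ and $o(D|\N_{k-1}|)$ to $\sum_Q\deg_H(Q)$, so $e(H)=(1+o(1))D'|\N_k|=(1+o(1))D|\N_{k-1}|$ and therefore $|\N_k|/|\N_{k-1}|=(1+o(1))D/D'$. It remains only to check the arithmetic identity $D/D'=\frac{r}{(2r-2)k}$: after collecting terms, both $D$ and $D'$ acquire a factor of $(r-1)^5+1$ and everything else cancels, leaving precisely $\frac{r}{(2r-2)k}$ (as a sanity check, when $r=2$ the edge-between-parts data is forced and one recovers $|\N_k|/|\N_{k-1}|\sim 1/k$).

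The step I expect to be the main obstacle is the reverse count: pinning down exactly which balance statistics of $Q$ --- in particular which of the interaction sets $W_a\cap M(W_b)$ --- control $\deg_H(Q)$ up to a $(1+o(1))$ factor, verifying that each of them follows from a suitable form of Proposition~\ref{prop:balancedswitch}, and checking that the dependence of $\psi(Q)$ on the ``type'' of $M$ disappears (as it must, the answer being independent of $M$). A secondary technical point: Proposition~\ref{prop:balancedswitch} is stated only for $1\le k\le\log n$, whereas the lemma asserts the estimate for all $k=o(n)$; the larger range is handled either by carrying the same concentration argument through (the relevant tail bounds only improve as $k$ grows) or by observing that such large $k$ contribute negligibly when the ratio estimates are assembled to prove Theorem~\ref{thm:strongmain}.
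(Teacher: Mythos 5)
Your switching framework is correct in outline, but you have missed the one choice that makes the paper's proof tractable, and the gaps you yourself flag are real and not easily patched.

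The paper does \emph{not} define a good triple $(x,y,z)$ by requiring the three new pairs $xP(y),yP(z),zP(x)$ to cross the $r$-partition, as you propose. Instead it requires $x,y,z$ to all lie in the \emph{same} part $V_i$ (and then the new pairs cross the partition automatically, since $P(x),P(y),P(z)\notin V_i$). This restriction is the crux. With it, the forward count becomes completely structure-free: $2k$ choices for $x$, then $y$ and $z$ each range over $V_i$ minus $O(k)$ forbidden vertices, so $\deg_H(P)=2k\bigl(\tfrac{2n}{r}-O(k)\bigr)^2$ for \emph{every} $P\in\N_k$, with no inclusion--exclusion over parts and no dependence on the edge-between-parts profile of $P$. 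The reverse count then pins $z=Q(M(x))$, and the only statistic entering $\deg_H(Q)$ is the number of $x\in V_i$ with $Q(M(x))\in V_i$ --- which is exactly and only what Proposition~\ref{prop:balancedswitch} controls. The ratio $D/D'$ then comes out to $\tfrac{4n^2/(r(r-1))}{8kn^2/r^2}=\tfrac{r}{(2r-2)k}$ by a two-line computation.

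Your version, by contrast, puts you in a much worse position. Your $\deg_H(P)$ depends on the edge-between-parts counts $p_{ij}(P)$, so you already need a balance statement on the $\N_k$ side; and your $\deg_H(Q)$ depends on the interaction sets $W_a\cap M(W_b)=\{x:Q(x)\in V_a,\ Q(M(x))\in V_b\}$. Proposition~\ref{prop:balancedswitch} as stated gives concentration only for the single diagonal quantity $|\{x\in V_i:Q(M(x))\in V_i\}|$, not for the full array of $W_a\cap M(W_b)$; you correctly identify this as the main obstacle but do not close it, and closing it is not ``routine'' --- it would require reproving the technical machinery of Subsection~\ref{sec:balanced} for these finer statistics. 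Finally, your arithmetic claim that $D$ and $D'$ ``both acquire a factor of $(r-1)^5+1$ and everything else cancels'' is asserted without computation, and I see no reason such a factor should arise; given that the numerators and denominators here come from inclusion--exclusion over at most two parts at a time, a fifth power is implausible, and a wrong final ratio would sink the whole lemma. Your observation about the $k\le\log n$ versus $k=o(n)$ range is correct and is a (mild) issue with the paper's own statement of the lemma; it is harmless because Proposition~\ref{prop:main} only invokes the ratio estimate for $k\le\omega$ with $\omega$ growing arbitrarily slowly.
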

	\begin{proof}
		As before, given a triple of distinct vertices $(x,y,z)$, we define the $(x,y,z)$-switch of $P$ to be \[P\setminus\{xP(x),yP(y),zP(z)\}\cup \{xP(y),yP(z),zP(x)\}.\]  Note that in general the $(x,y,z)$-switch of $P$ might not be a perfect matching.  For $P\in \N_{k}$, we say that a triple $(x,y,z)$ is \textit{good} (with respect to $P$ and $M$) if the following hold:
		\begin{enumerate}
			\item We have $x,y,z\in V_i$ for some $i$.
			\item We have $xP(x)\in M$ and $yP(y),zP(z),yP(z)\notin M$.
		\end{enumerate}
		These conditions ensure that if $Q$ is an $(x,y,z)$-switch of $P\in \N_{k}$ with $(x,y,z)$ good, then $Q\in \N_{k-1}$.  We define the bipartite graph $H$ on $\N_k\cup \N_{k-1}$ by having each $P\in \N_{k}$ adjacent to every $Q\in \N_{k-1}$ which is an $(x,y,z)$-switch of $P$ for some good $(x,y,z)$.
		
		\begin{claim}
			For every $P\in \N_{k}$, we have
			\[\deg_H(P)=2k\left(\frac{2n}{r}-O(k)\right)^2\]
		\end{claim}
		\begin{proof}
			Note that $\deg_H(P)$ is exactly the number of triples $(x,y,z)$ which are good with respect to $P$.  To form such a triple, we first pick any $x$ such that $xP(x)\in M$, the number of choices of which is exactly $2k$ by definition of $\N_{k}$.  If one has chosen $x\in V_i$, then the number of choices for $y\in V_i\setminus\{x\}$ with $yP(y)\notin M$ is at most $|V_i|=2n/r$ and at least $2n/r-1-k$.  Given such $x,y$, the number of choices for $z\in V_i\setminus\{x,y\}$ with $zP(z)\notin M$ and $yP(z)\notin M$ is between $2n/r$ and $2n/r-3-k$.  As each good triple can be formed uniquely by this procedure, we conclude the result.
		\end{proof}
		Motivated by Proposition~\ref{prop:balancedswitch}, we define $\F_{k-1}\subseteq \N_{k-1}$ to be the set of perfect matchings $Q$ such that for any $i\in [r]$, the number of vertices $x\in V_i$ with $Q(M(x))\in V_i$ is $\frac{2n}{r(r-1)}+O(\sqrt{n\log n})$.
		\begin{claim}
			For every $Q\in \F_{k-1}$, we have
			\[\deg_H(Q)=\left(\frac{2n}{r-1}+O\left(\sqrt{n\log n}+k\right)\right)\left(\frac{2n}{r}-O(k)\right),\]
			and for every $Q\in \N_{k-1}\setminus \F_{k-1}$, we have
			\[\deg_H(Q)\le 4n^2.\]
		\end{claim}
		\begin{proof}
			We say that a triple of distinct vertices $(x,y,z)$ is \textit{reverse good} for $Q$ if $x,y,z\in V_i$ for some $i$ and if $xQ(z)\in M$ and $yQ(x),zQ(y),yQ(y),zQ(z)\notin M$.  Note that $Q$ is a $(x,y,z)$-switch of some $P\in \N_{k}$ with $(x,y,z)$ good if and only if $(x,y,z)$ is reverse good, so it suffices to count how many triples have this property.
			
			Observe that for any reverse good triple, we have that $z=Q(M(x))$ is in the same part as $x$ (since $M(x)=Q(z)$).  Thus any such triple can be formed (uniquely) by first picking $x$ such that $Q(M(x))\ne x$ lies in the same part $V_i$ as $x$, setting $z=Q(M(x))$, and then choosing any $y\in V_i\setminus\{x,z\}$ such that $yQ(x),zQ(y),yQ(y)\notin M$.
			
			There are trivially at most $4n^2$ ways to carry out this procedure, so we conclude $\deg_G(Q)\le 4n^2$ for all $Q\in \N_{k-1}$, and we may assume $Q\in \F_{k-1}$ from now on.  By definition of $\F_{k-1}$, the number of choices for $x$ is at most $\frac{2n}{r-1}+O(\sqrt{n\log n})$ and at least $\frac{2n}{r-1}+O(\sqrt{n\log n})-2k$ (since $Q(M(x))=x$ is only possible if $xQ(x)\in M$).  Given this, the number of choices for $y$ is at most $2n/r$ and at least $2n/r-4-k$.  Putting this together gives the claim.
		\end{proof}
		This last claim and $k=o(n)$ implies
		\[(1+o(1))\frac{4n^2}{r(r-1)}|\F_{k-1}|\le \sum_{Q\in \N_{k-1}}\deg_H(Q)\le (1+o(1))\frac{4n^2}{r(r-1)}|\F_{k-1}|+4n^2|\N_{k-1}\setminus\F_{k-1}|.\]
		Since $|\F_{k-1}|=(1+o(1))|\N_{k-1}|$ by Proposition~\ref{prop:balancedswitch}, the above gives
		\[\sum_{Q\in \N_{k-1}}\deg_H(Q)=(1+o(1))\frac{4n^2}{r(r-1)}|\N_{k-1}|.\]
		Using this and the first claim gives
		\[(1+o(1))\frac{8kn^2}{r^2}|\N_{k}|=\sum_{P\in \N_{k}} \deg_H(P)=\sum_{Q\in \N_{k-1}}\deg_H(Q)=(1+o(1))\frac{4n^2}{r(r-1)}|\N_{k-1}|,\]
		and we conclude the result.
	\end{proof}
	\subsection{Using Ratio Estimates}
	
	It remains to use the ratio bounds to establish our results, and for this we use the following. 
	\begin{prop}\label{prop:main}
		Let $G_n$ be a sequence of graphs with $|V(G_n)|$ tending towards infinity such that each $G_n$ has an even number of vertices, $\Theta(|V(G_n)|^2)$ edges, and contains some perfect matching $M_n\sub G_n$.  Let $R_n$ be a uniform random perfect matching on $G_n$ and $X=X(M_n)$ the number of edges shared by~$M_n$ and~$R_n$.  If there exists a fixed real number $\alpha>0$ such that for all $k=o(|V(G_n)|)$ we have
		\[\frac{|\N_k|}{|\N_{k-1}|}=(1+o(1))\frac{\alpha}{k},\]
		and such that for all $e\in G_n$ we have
		\[\Pr(e\in R_n)=O\left(\frac{1}{|V(G_n)|}\right),\]
		then
		\[ \TV\lrpar{X,\Po\lrpar{\alpha}}\to 0. \]
	\end{prop}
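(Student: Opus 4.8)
The plan is to prove convergence in total variation by combining pointwise convergence $\Pr(X=\ell)\to\Pr(\Po(\alpha)=\ell)$ for every fixed $\ell$ with uniform control of the upper tails of $X$ and of $\Po(\alpha)$. Write $N=|V(G_n)|$ and recall $\Pr(X=\ell)=|\N_\ell|/\PM(G_n)$. The argument rests on two observations. First, telescoping the ratio hypothesis over the fixed finite range $1\le k\le \ell$ gives, for each fixed $\ell$,
\[\f{|\N_\ell|}{|\N_0|}=\prod_{k=1}^{\ell}\f{|\N_k|}{|\N_{k-1}|}=\prod_{k=1}^{\ell}\lrpar{(1+o(1))\f{\alpha}{k}}=(1+o(1))\f{\alpha^\ell}{\ell!},\]
since a product of boundedly many factors $1+o(1)$ is again $1+o(1)$ (here $|\N_0|>0$ for $n$ large, as the ratios in the hypothesis must be well-defined); in particular $\Pr(X=\ell)=(1+o(1))\f{\alpha^\ell}{\ell!}\Pr(X=0)$. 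Second, the edge-probability hypothesis yields a uniform first-moment bound: $\E[X]=\sum_{e\in M_n}\Pr(e\in R_n)=\f{N}{2}\cdot O\lrpar{\f{1}{N}}=O(1)$, so by Markov's inequality there is an absolute constant $C$ with $\Pr(X>L)\le C/L$ for every integer $L\ge 1$ and every $n$.

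The main point is to turn the pointwise ratio estimates into the statement $\Pr(X=0)\to e^{-\alpha}$; morally this is an application of Tannery's theorem to the identity $\PM(G_n)/|\N_0|=\sum_{\ell\ge 0}|\N_\ell|/|\N_0|$, with the uniform tail bound supplying the needed domination. Concretely, I would pass to an arbitrary subsequence along which $\Pr(X=0)\to p\in[0,1]$ (possible by compactness). For each fixed $L$, decompose $1=\sum_{\ell=0}^{L}\Pr(X=\ell)+\Pr(X>L)$; letting $n\to\infty$ along the subsequence, the finitely many terms with $\ell\le L$ tend to $p\,\alpha^\ell/\ell!$ by the first observation, while the tail $\Pr(X>L)$ stays in $[0,C/L]$. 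Hence $p\sum_{\ell=0}^{L}\alpha^\ell/\ell!=1-q_L$ with $q_L\in[0,C/L]$, and letting $L\to\infty$ forces $p\,e^{\alpha}=1$, i.e.\ $p=e^{-\alpha}$. As every subsequence admits a further subsequence with this same limit, $\Pr(X=0)\to e^{-\alpha}$, and then the first observation yields $\Pr(X=\ell)\to\f{\alpha^\ell}{\ell!}e^{-\alpha}=\Pr(\Po(\alpha)=\ell)$ for every fixed $\ell$.

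It remains to upgrade pointwise convergence to convergence in total variation, which I would do by splitting the defining sum at a large cutoff. Given $\eps>0$, choose $L$ so that $C/L+\Pr(\Po(\alpha)>L)<\eps$; such $L$ exists since $\Pr(\Po(\alpha)>L)\to 0$. Then
\[2\,\TV\lrpar{X,\Po(\alpha)}=\sum_{\ell=0}^{L}\abs{\Pr(X=\ell)-\Pr(\Po(\alpha)=\ell)}+\sum_{\ell>L}\abs{\Pr(X=\ell)-\Pr(\Po(\alpha)=\ell)}.\]
The second sum is at most $\Pr(X>L)+\Pr(\Po(\alpha)>L)\le C/L+\Pr(\Po(\alpha)>L)<\eps$ uniformly in $n$, and the first sum is a fixed finite sum of terms each tending to $0$, so it is $<\eps$ once $n$ is large. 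Hence $\TV(X,\Po(\alpha))\to 0$.

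The only genuinely delicate step is the middle one---interchanging $\lim_n$ with the infinite sum over $\ell$---since the ratio estimates control only boundedly many low-order terms at a time. This is precisely where the hypothesis $\Pr(e\in R_n)=O(1/N)$ enters: it bounds $\E[X]$ and hence the upper tail of $X$ uniformly, which is exactly the domination needed for Tannery's theorem. Everything else is routine bookkeeping with finite sums and the elementary tail of the Poisson distribution.
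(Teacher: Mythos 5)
Your proof is correct and uses the same core ingredients as the paper's: telescoping the ratio hypothesis, using the edge-probability hypothesis plus Markov to bound the upper tail of $X$ uniformly, and splitting the total-variation sum at a cutoff. The only genuine difference is in how the interchange of limits is organized. The paper picks a slowly-growing function $\omega(n)\to\infty$ for which the product estimate $|\N_\ell|/|\N_0|=(1+o(1/\omega))\alpha^\ell/\ell!$ holds uniformly for $\ell\le\omega$, then normalizes $\sum_k|\N_k|=(1+O(1/\omega))e^\alpha|\N_0|$ and splits the TV sum at $\sqrt{\omega}$, yielding an explicit $O(1/\sqrt{\omega})$ rate. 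You instead work with each fixed $\ell$, pass to a convergent subsequence of $\Pr(X=0)$, and use $\sum_{\ell\le L}\Pr(X=\ell)+\Pr(X>L)=1$ together with the uniform tail bound to squeeze the subsequential limit to $e^{-\alpha}$; you then split the TV sum at a fixed $L$ depending on $\eps$. Your subsequence version is more qualitative (it gives no rate) but arguably cleaner, since it never needs to quantify how slowly $\omega$ can grow relative to the $o(1)$ in the hypothesis. Either way the argument is complete and sound. One small thing worth saying explicitly: the Markov bound $\Pr(X>L)\le C/L$ holds for all $n$ sufficiently large (not literally all $n$) since $\E[X]=O(1)$, but this of course suffices for the limiting statements you make.
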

	\begin{proof}
		By assumption, there exists a function $\omega:=\omega(n)$ growing arbitrarily slowly to infinity such that for any $\ell\le \omega$,     \begin{equation} \frac{|\N_\ell|}{|\N_0|} = \prod_{1\le k\le \ell} \frac{|\N_{k}|}{|\N_{k-1}|}=\lrpar{1+o\lrpar{\frac{1}{\omega}}} \frac{\alpha^\ell}{\ell!}.\label{eq:omega}\end{equation}
		Also by assumption and linearity of expectation, we have
		\[ \E X= \frac{|V(G_n)|}{2}\cdot O\left(\frac{1}{|V(G_n)|}\right)= O(1).\]
		By Markov's inequality we find 
		\begin{equation} \frac{\sum_{k\ge \omega}|\N_k|}{\sum_{k\ge 0} |\N_k|}=\Pr(X\ge \omega)=O(1/\omega).\label{eq:Markov}\end{equation}
		Using \eqref{eq:Markov} and \eqref{eq:omega} gives
		\[\sum_{k\ge 0} |\N_k|=(1+O(1/\omega))\sum_{k=0}^\omega |\N_k|=(1+O(1/\omega)) |\N_0|\sum_{k=0}^\omega \frac{\alpha^k}{k!}=(1+O(1/\omega)) e^{\alpha}|\N_0|, \]
		where we use $e^{\alpha}=\sum_{k\ge 0}\frac{\alpha^k}{k!}$ and the routine estimate \begin{equation}\sum_{k>\omega} \frac{\alpha^k}{k!}=O\left(\frac{\alpha^{\omega+1}}{(\omega+1)!}\right).\label{eq:Poisson}\end{equation}  This together with \eqref{eq:omega} gives for all $\ell\le \sqrt{\omega}$ that
		
		\begin{equation} \Pr(X=\ell)=\frac{|\N_\ell|}{\sum_{k\ge 0} |\N_k|}= (1+O(1/\omega))\frac{|\N_\ell|}{e^{\alpha}|\N_0|}=(1+O(1/\omega)) \frac{\alpha^\ell}{\ell!}e^{-\alpha}. \label{eq:whatever} \end{equation}
		Therefore,
		\begin{align*}
			2\TV(X,\Po(\alpha))&= \sum_{k\ge 0} \abs{\Pr(X=k) - \frac{1}{k!} \alpha^k e^{-\alpha}  }\\
			&\le \sum_{0\le k\le \sqrt{\omega}} \abs{\Pr(X=k) - \frac{1}{k!} \alpha^ke^{-\alpha}  } + \Pr(X>\sqrt{\omega})+\Pr(\Po(\alpha)
			>\sqrt{\omega})\le O(1/\sqrt{\omega}),
		\end{align*}
		where this last step used \eqref{eq:whatever}, \eqref{eq:Markov}, and \eqref{eq:Poisson}.
	\end{proof}
	
	We note that the condition $\Pr(e\in R_n)=O(|V(G_n)|^{-1})$ in Proposition~\ref{prop:main} will automatically apply whenever the $G_n$ are edge-transitive, and in particular when $G_n=K_{r\times 2n/r}$.  With this in mind, we prove our main result.
	
	\begin{proof}[Proof of Theorem~\ref{thm:strongmain}]
		Let $\eps\in (0,1)$ be an arbitrary fixed constant. We will show that there exists a constant $n_\eps$ such that for all $n\ge n_\eps$, $r|2n$, and perfect matching $M_n$ of $K_{r\times 2n/r}$, the random variable $X=X(M_n)$ satisfies  
		\[ d(n,r):=\TV\lrpar{X,\Po\lrpar{\frac{r}{2r-2}}}\le \eps.  \]
		Let $t(\eps,n)$ be the largest number such that for all $r\le t(\eps,n)$ with $r|2n$, we have 
		$d(n,r)\le \eps$.  
		Lemma~\ref{lem:switching} and Proposition~\ref{prop:main} implies that for any fixed integer $c$, we have $t(\eps,n)\ge c$ for all large enough $n$. Therefore $t(\eps,n)\to\infty$ as $n\to\infty$. 
		Now from Lemma~\ref{lem:switchLarge} and Proposition~\ref{prop:main}, it follows that for all large enough $n$ and $r\ge t(\eps,n)$, $r|2n$, we have 
		\[ d(n,r)\le \TV\lrpar{X,\Po\lrpar{\frac{1}{2}}}+\TV\lrpar{\Po\lrpar{\frac{1}{2}},\Po\lrpar{\frac{r}{2r-2}}}\le \eps,\]
		which completes the proof.
	\end{proof}    
	Finally, we prove Theorem~\ref{thm:larger} when working with graphs of minimum degree $2n-o(n)$.
	
	\begin{proof}[Proof of Theorem~\ref{thm:larger}]
	    By Proposition~\ref{prop:main} and Lemma~\ref{lem:switchLarge}, it suffices to show that if $G$ is a $2n$-vertex graph with minimum degree $2n-o(n)$, then a uniform random perfect matching $R$ of $G$ satisfies $\Pr(e\in R)=O(1/n)$ for all $e\in E(G)$.  We accomplish this with a switching argument.
	    
	    Fix an edge $e=xy$ in $G$.  Let $\N_e$ be the set of perfect matching on $G$ that contains $e$, and let $\N_e^c$ be the set of perfect matching not containing $e$. We consider an auxiliary graph $H$ on $\N_e\cup \N_e^c$ where $P\in \N_e$ and $Q\in \N_e^c$ are adjacent if there exists an edge $uv\in P\setminus\{xy\}$ such that $Q=P\setminus\{uv,xy\}\cup \{uy,xv\}$.  
	    
	    It is not difficult to see that $\deg_H(Q)\le 1$ for all $Q\in \N_e^c$.  Note that $P$ has a neighbor in $H$ for each edge $uv\in P\setminus\{xy\}$ with $u\in N_G(y)$ and $v\in N_G(x)$, and the number of such edges is at least $n-o(n)$ by the minimum degree condition.  With this
    \[  (1+o(1))n|\N_e|=\sum_{P\in \N_e} \deg_H(P)=\sum_{Q\in \N_e^c} \deg_H(Q) \le  |\N_e^c|.\]
    Therefore 
    \[ \Pr(e\in R) \le \frac{|\N_e|}{|\N_e|+|\N_e^c|}=O\lrpar{\frac{1}{n}} .\]
\end{proof}

	\subsection{Proof of Proposition~\ref{prop:balancedswitch}}\label{sec:balanced}

	We prove Proposition~\ref{prop:balancedswitch} through the following technical result.

	\begin{prop}\label{prop:balanced}
		Let $r\ge 3$ be fixed, and let $v_1,\dots, v_r$ be integers such that $v_i=n-O(\log n)$ and $\sum_{i\in [r]} v_i$ is even. Let $G$ be a complete $r$-partite graph with partition $V_1,\dots, V_r$ where $|V_i|=v_i$ for all $i\in [r]$. Let $R$ be a uniform random perfect matching of $G$. Then with high probability, the number of edges of $R$ between $V_i$ and $V_j$ equals $\frac{n}{r-1}+O(\sqrt{n\log n})$ for all $i,j$.
	\end{prop}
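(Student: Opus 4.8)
The plan is to enumerate the perfect matchings of $G$ according to their ``cross-edge profile'' and then apply a routine concentration argument for the resulting distribution on a transportation polytope. For a perfect matching $P$ of $G$ and $i<j$ write $m_{ij}(P)$ for the number of edges of $P$ between $V_i$ and $V_j$, set $m(P)=(m_{ij}(P))_{i<j}$, and extend by $m_{ji}=m_{ij}$. The profile lies in the finite set
\[ \F=\Bigl\{m\in\Z_{\ge 0}^{\binom r2}:\ \textstyle\sum_{j\ne i}m_{ij}=v_i\ \text{for all }i\in[r]\Bigr\}. \]
First I would show that the number of perfect matchings of $G$ with a given profile $m\in\F$ equals
\[ N(m)=\frac{\prod_{i\in[r]}v_i!}{\prod_{i<j}m_{ij}!}, \]
by first choosing, for each $i$, an ordered partition of $V_i$ into blocks of sizes $(m_{ij})_{j\ne i}$ (a multinomial count $v_i!/\prod_{j\ne i}m_{ij}!$) and then, for each pair $\{i,j\}$, a bijection ($m_{ij}!$ choices) between the block of $V_i$ pointing to $j$ and the block of $V_j$ pointing to $i$; each matching arises exactly once. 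Consequently $\Pr(m(R)=m)=N(m)/\sum_{m'\in\F}N(m')$, and it suffices to prove that with high probability every coordinate of $m(R)$ is $\tfrac{n}{r-1}+O(\sqrt{n\log n})$.

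Next I would locate the maximizer of $N$ over $\F$. Writing $\phi(k)=k\ln k-k$ and using Stirling in the uniform form $\ln k!=\phi(k)+O(\log n)$ for $0\le k\le n$ (implied constant depending only on $r$, since there are $r+\binom r2=O(1)$ factorials), one gets $\ln N(m)=\bigl(\sum_i\phi(v_i)\bigr)-g(m)+O(\log n)$ where $g(m)=\sum_{i<j}\phi(m_{ij})$ is strictly convex. Let $\bar m$ minimize $g$ over the real relaxation of $\F$. Since $\phi'(x)=\ln x\to-\infty$ as $x\to0^+$ while the other coordinates remain of order $n$, $\bar m$ lies in the relative interior, so the KKT conditions read $\ln\bar m_{ij}=\lambda_i+\lambda_j$, i.e.\ $\bar m_{ij}=\mu_i\mu_j$ with $\mu_i(S-\mu_i)=v_i$ and $S=\sum_k\mu_k$. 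In the exactly balanced case $v_i\equiv n$ symmetry forces $\mu_i\equiv\sqrt{n/(r-1)}$ and $\bar m_{ij}\equiv\tfrac{n}{r-1}$; a direct perturbation estimate (or the implicit function theorem) using $v_i=n-O(\log n)$ then gives $\bar m_{ij}=\tfrac{n}{r-1}+O(\log n)$ in general. (For $r=3$ the constraints of $\F$ force a single profile and the claim is immediate, so one may assume $r\ge4$.) Finally fix any integral $m^\circ\in\F$ with $\|m^\circ-\bar m\|_\infty=O(1)$, which exists because $\bar m$ is at distance $\Theta(n)$ from the boundary of the polytope; then $g(m^\circ)-g(\bar m)=O(\log n)$.

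The key estimate is a strong-convexity bound. For any $m\in\F$, along the segment from $\bar m$ to $m$ every coordinate lies in $(0,n]$, where $\phi''(x)=1/x\ge 1/n$, so $\nabla^2 g\succeq\tfrac1n I$; combining Taylor's theorem with the first-order optimality $\nabla g(\bar m)\cdot(m-\bar m)=0$ gives $g(m)-g(\bar m)\ge\tfrac{1}{2n}\|m-\bar m\|_2^2$. Together with the Stirling expansion this yields, for every $m\in\F$,
\[ \Pr(m(R)=m)\le\frac{N(m)}{N(m^\circ)}\le\exp\!\Bigl(-\frac{\|m-\bar m\|_2^2}{2n}+O(\log n)\Bigr). \]
If some coordinate of $m$ differs from $\tfrac{n}{r-1}$ by more than $C\sqrt{n\log n}$ then, since $\bar m_{ij}=\tfrac{n}{r-1}+O(\log n)$, we have $\|m-\bar m\|_2^2\ge\tfrac14C^2n\log n$ for $n$ large, so the bound above is at most $n^{-C^2/8+O(1)}$. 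Since $|\F|\le(n+1)^{\binom r2}$, a union bound over all such ``bad'' profiles gives a failure probability of $O(n^{-1})$ once $C=C(r)$ is chosen large enough, which proves the proposition.

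The one genuinely delicate point I anticipate is pinning down the maximizer: showing that the solution of $\mu_i(S-\mu_i)=v_i$ moves by only $O(\log n)$ when the $v_i$ are perturbed from $n$ by $O(\log n)$ (so that $\bar m_{ij}=\tfrac{n}{r-1}+O(\log n)$), while keeping the Stirling error uniform across $\F$. Everything else is the standard ``concentration of a log-concave lattice distribution on a transportation polytope'' template, and in particular uses nothing about the structure of $M$ — the statement is purely about a uniform random perfect matching of a near-balanced complete multipartite graph.
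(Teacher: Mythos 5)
Your proof is correct and takes a genuinely different route from the paper. Both start from the same enumeration $\Pr(m(R)=m)\propto 1/\prod_{i<j}m_{ij}!$, but the paper proves concentration by an entirely discrete argument: it partitions the profile space into cubes of side $\Theta(\sqrt{n\log n})$, uses a combinatorial injection to show that no cube contains more profiles than the central region $\C$, and bounds the probability ratio of a far profile against a central one by an iterative manipulation of factorial products (its Lemma~\ref{lem:optimise}) that never invokes Stirling or continuous optimization. You instead go analytic: Stirling gives $\ln N(m)=\text{const}-g(m)+O(\log n)$ with $g(m)=\sum_{i<j}\phi(m_{ij})$ strictly convex, you locate the real minimizer $\bar m$ by KKT and a perturbation argument giving $\bar m_{ij}=\tfrac{n}{r-1}+O(\log n)$, and then the strong convexity bound $\nabla^2 g\succeq\tfrac1nI$ together with first-order optimality yields a Gaussian tail which you sum over the $\le(n+1)^{\binom r2}$ profiles. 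This is the standard ``concentration of a log-concave lattice measure'' template and arguably explains more transparently why the natural window is $\Theta(\sqrt{n\log n})$; what the paper's route buys in exchange is staying entirely at the level of integer counting, with no Stirling error terms to track and no optimizer to locate. One step in your write-up needs filling in: you assert the existence of an integral $m^\circ\in\F$ with $\|m^\circ-\bar m\|_\infty=O(1)$ ``because $\bar m$ is far from the boundary,'' but distance from the boundary alone does not guarantee a nearby lattice point of the affine system $\sum_{j\ne i}m_{ij}=v_i$ (one must check the lattice structure, or construct $m^\circ$ directly). Fortunately you only use $g(m^\circ)-g(\bar m)=O(\log n)$, and this already follows if $\|m^\circ-\bar m\|_\infty=O(\sqrt{n\log n})$: since $\bar m_{ij}$ and $m^\circ_{ij}$ are both $\ge(1-o(1))\tfrac{n}{r-1}$, the Hessian along the segment is $O(1/n)$ and Taylor gives $g(m^\circ)-g(\bar m)=O(\tfrac1n\cdot n\log n)=O(\log n)$. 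Such an $m^\circ$ can be produced explicitly by greedily placing roughly $n/(r-1)$ matching edges between each pair of parts and completing via Dirac's theorem, which is exactly what the paper does in its Lemma~\ref{lem:nonempty}. So the gap is minor and fixable, but it should be filled rather than waved away.
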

	
	We emphasize that the $n$ of Proposition~\ref{prop:balanced} should be thought of as roughly $2/r$ times the $n$ of Proposition~\ref{prop:balancedswitch}.  Throughout the proof we assume $n$ is sufficiently large in terms of $r$, as the result is trivial otherwise. Consider vectors
	\[v=(v_{i,j})_{i<j,\ i,j\in [r]}\in \Z^{\binom{r}{2}}.\]
	Note that there exists a perfect matching $R$ in $G$ with $v_{i,j}$ edges between parts $V_i$ and $V_j$ for all $i<j$ if and only if the vector $v$ satisfies the following system of constraints (PM):
	\begin{align}
		v_{i,j}&\ge 0 \ \forall i,j\\
		\sum_{j\in [r]\setminus \{i\}} v_{i,j}&=v_i\ \forall i, \text{ where  $v_{i,j}:=v_{j,i}$ if $i>j$}.
	\end{align}
	Let 
	\[\cp=\{v\in \Z^{\binom{r}{2}} | v \text{ satisfies (PM)}\},\]
	and let $c=\floor{n/(r-1)}, d=\floor{\sqrt{n\log n}}$.  Note that $c$ is essentially the average number of edges we expect between two parts in a random matching.  
	
	The rough strategy for the rest of the proof is as follows.  We partition the space $\Z^{r\choose 2}$ into cubes $\B_u$ of side length $2d$ and let $\cp_u=\B_u\cap \cp$ denote the set of vectors in $\B_u$ that satisfy (PM).  We let $\C$ denote the union of the $\cp_u$ subcubes with $v_{i,j}=c\pm 3d$ for all $v\in \cp_u$.  With this, Proposition~\ref{prop:balanced} essentially says that with high probability, a uniform random $R$ corresponds to some vector in $\C$ with high probability.  To prove this, we show  (i) that there are not too many vectors of $\cp$ outside of $\C$ and (ii) that the probability of a uniformly random perfect matching $R$ corresponding to any given $v\notin \C$ is very small.  With this we can conclude our result.  We now move on to the precise details.
	
	For any integral vector $u=(u_{i,j})_{i<j\in[r]}$ (possibly with negative entries), let 
	\begin{align*}
		\B_u&:=\prod_{i<j,\ i,j\in[r]} (c+2u_{i,j}d-d, c+2u_{i,j}d+d] \subseteq \Z^{\binom{r}{2}},\\
		\cp_u&:= \cp\cap \B_u,\\
		\C&:=\bigcup_{u:|u|_\infty\le 1} \cp_u.
	\end{align*}
	We start with a small observation.
	\begin{lemma}\label{lem:nonempty}
		The set $\cp_0$ is non-empty.
	\end{lemma}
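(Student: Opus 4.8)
The plan is to exhibit an explicit element of $\cp_0$, i.e.\ a vector $v=(v_{i,j})_{i<j}$ with every coordinate in the interval $(c-d,c+d]$ that satisfies the degree constraints $\sum_{j\ne i} v_{i,j}=v_i$ for all $i$. The natural first attempt is the ``uniform'' vector with all coordinates equal to $c=\floor{n/(r-1)}$: this lies in $\B_0$ since $u=0$, and it gives each part degree $(r-1)c = (r-1)\floor{n/(r-1)} = n - O(1)$, which matches $v_i = n - O(\log n)$ only up to an error of $O(\log n)$. So I would start from the all-$c$ vector and then correct it by adding small perturbations to a few coordinates to fix the residuals $\delta_i := v_i - (r-1)c$, which satisfy $|\delta_i| = O(\log n)$ and $\sum_i \delta_i$ even (since $\sum_i v_i$ is even and $r(r-1)c$ is even as $\binom{r}{2}c\cdot 2$, or one checks parity directly).

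The key step is a small linear-algebra / flow argument: I need to find integers $w_{i,j}$ with $|w_{i,j}| = O(\log n)$ such that $\sum_{j\ne i} w_{i,j} = \delta_i$ for every $i$. Equivalently, I want an integral ``flow'' on the complete graph $K_r$ realizing the prescribed vertex-imbalances $\delta_i$. Since $\sum_i \delta_i$ is even, this is always possible: for instance, pair up the imbalances or route each unit of excess from a vertex $i$ with $\delta_i>0$ to a vertex $i'$ with $\delta_{i'}<0$ along a single edge $ii'$ (using that $r\ge 3$, or even just $r\ge 2$, so $K_r$ is connected and has enough edges). Because the total imbalance is $O(\log n)$ and $r$ is a fixed constant, each edge carries at most $O(\log n)$ units, so $|w_{i,j}| = O(\log n) < d$ for $n$ large (as $d = \floor{\sqrt{n\log n}}$ dominates $\log n$). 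Setting $v_{i,j} := c + w_{i,j}$ then gives $v_{i,j} \in (c-d, c+d]$, so $v\in\B_0$, and $\sum_{j\ne i} v_{i,j} = (r-1)c + \delta_i = v_i$, so $v\in\cp$; hence $v\in\cp_0$ and $\cp_0\ne\emptyset$.

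The only mild subtlety — and the one place to be slightly careful — is the parity bookkeeping: one must confirm that the integral flow realizing $(\delta_i)_i$ exists, which needs exactly the hypothesis that $\sum_i \delta_i = \sum_i v_i - r(r-1)c$ is even; this follows from the assumption that $\sum_i v_i$ is even together with the observation that $r(r-1)c$ is always even. Everything else is routine size estimation using $r=O(1)$ and $\log n = o(\sqrt{n\log n})$. I would write this up in a few lines, constructing $v$ explicitly and then verifying membership in $\B_0$ and in $\cp$.
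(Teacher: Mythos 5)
Your approach is genuinely different from the paper's. The paper constructs an actual perfect matching of $G$: it greedily places roughly $c$ edges between each pair of parts $V_i,V_j$, then invokes Dirac's theorem (using $r\ge 3$) to complete the residual matching, and finally checks that the resulting edge-count vector lies in $\B_0$. You instead work directly with the linear system (PM), perturbing the all-$c$ vector by a small correction $w$. This is cleaner: it avoids both the greedy matching construction and Dirac's theorem, and uses only that $\cp$ is defined by the linear constraints (which is indeed how the paper defines it).

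One inaccuracy to fix in realizing the correction $w$: the system is not a flow problem. Adding a unit to $w_{i,j}$ increases the sum $\sum_{j'\ne i}w_{i,j'}$ at vertex $i$ \emph{and} the corresponding sum at vertex $j$ by $+1$ each, with the same sign, so ``routing a unit of excess from a surplus vertex to a deficit vertex along an edge'' does not cancel both imbalances as it would in a transportation problem. The correct statement is that the (unsigned) vertex--edge incidence matrix of $K_r$, for $r\ge 3$ connected and non-bipartite, has integer image exactly $\{\delta\in\Z^r:\sum_i\delta_i\text{ even}\}$, which is precisely the parity condition you identified. A concrete construction: set $w_{1,i}=\delta_i$ for $i\ge 4$ and all other entries off the triangle $\{1,2,3\}$ to zero; replace $\delta_1$ by $\delta_1'=\delta_1-\sum_{i\ge 4}\delta_i$ (leaving $\delta_1'+\delta_2+\delta_3$ even); then solve $w_{1,2}+w_{1,3}=\delta_1'$, $w_{1,2}+w_{2,3}=\delta_2$, $w_{1,3}+w_{2,3}=\delta_3$, which has the integral solution $w_{1,2}=(\delta_1'+\delta_2-\delta_3)/2$ and cyclic permutations. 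This gives $|w_{i,j}|=O(\log n)\ll d$ for $n$ large (recall $r$ is fixed), so $v_{i,j}=c+w_{i,j}\in(c-d,c+d]$ and $v_{i,j}\ge 0$, closing the argument.
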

	\begin{proof}
		We will construct a perfect matching which corresponds to a vector in $\cp_0$ as follows. Arbitrarily order the pairs $(i,j), 1\le i<j\le r$, then sequentially put an arbitrary matching with $n/(r-1)-\sqrt{n\log n}/r$ edges between $V_i$ and $V_j$ such that the current matching avoids the vertices used in all previous matchings. Let $P'$ be the matching at the end of this process.  Observe that the subgraph of $G$ induced by the vertices which are not in $P'$ is a complete $r$-partite graph where each partition has $\frac{r-1}{r}\sqrt{n\log n}+O(\log n)$ vertices. By Dirac's theorem and the fact that $r\ge 3$, there exists a perfect matching $P$ that completes $P'$. It is clear that for any $i<j$, the number of edges between any two parts lies in 
		\[\left[ \frac{n}{r-1}-\frac{\sqrt{n\log n}}{r}, \frac{n}{r-1} +\frac{r-2}{r}\sqrt{n\log n}+O(\log n)\right], \]
		proving the result.
	\end{proof}
	This allows us to prove that there are not too many elements in any given set $\cp_u$.
	\begin{lemma}\label{lem:densecenter}
		For any integral vector $u=(u_{i,j})_{i<j\in[r]}$, we have $|\cp_u|\le |\C|$.
	\end{lemma}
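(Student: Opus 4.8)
The plan is to build, for every integral vector $u$ with $\cp_u\neq\emptyset$, an explicit injection $\cp_u\hookrightarrow\C$; since $|\cp_u|=0\le|\C|$ whenever $\cp_u=\emptyset$ (note $\C\supseteq\cp_0\neq\emptyset$ by Lemma~\ref{lem:nonempty}), this settles the lemma. The mechanism is translation by the lattice
\[\Lambda:=\left\{w\in\Z^{\binom r2}\ :\ \sum_{j\in[r]\setminus\{i\}}w_{i,j}=0\ \text{ for every }i\in[r]\right\}\]
of vectors all of whose ``vertex sums'' vanish: if $v$ satisfies (PM) and $w\in\Lambda$, then $v-w$ still satisfies the equality constraints $\sum_{j\neq i}(\cdot)_{i,j}=v_i$, so the only way $v-w$ can fail (PM) is through the nonnegativity constraints. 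Here that failure will not occur, because $c=\floor{n/(r-1)}$ exceeds $3d=3\floor{\sqrt{n\log n}}$ once $n$ is large in terms of $r$.

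Concretely, fix $u$ with $\cp_u\neq\emptyset$, choose any $v^\ast\in\cp_u$, and choose any $p\in\cp_0$ (possible by Lemma~\ref{lem:nonempty}). Put $w:=v^\ast-p$; since $v^\ast$ and $p$ both satisfy the equalities of (PM) with the same right-hand sides $v_i$, we have $w\in\Lambda$. Define $\phi\colon\cp_u\to\Z^{\binom r2}$ by $\phi(v):=v-w=v-v^\ast+p$. Because $w$ is a fixed vector, $\phi$ is injective, so it remains only to verify $\phi(v)\in\C$ for each $v\in\cp_u$.

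For this, observe first that $\C=\cp\cap\prod_{i<j}(c-3d,\,c+3d]$, since the union of the cubes $\B_{u'}$ over $|u'|_\infty\le1$ is exactly $\prod_{i<j}(c-3d,\,c+3d]$. Now $v$ and $v^\ast$ lie in the common cube $\B_u$, which is a half-open interval of length $2d$ in each coordinate, so $v_{i,j}-v^\ast_{i,j}\in(-2d,2d)$ for all $i<j$; and $p\in\B_0$ gives $p_{i,j}\in(c-d,c+d]$. Hence $\phi(v)_{i,j}=p_{i,j}+(v_{i,j}-v^\ast_{i,j})\in(c-3d,\,c+3d)$ for all $i<j$, which in particular gives $c-3d<\phi(v)_{i,j}\le c+3d$. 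Since $c>3d$ for $n$ large, this forces $\phi(v)_{i,j}>0$, so $\phi(v)$ also satisfies the nonnegativity constraints; combined with $w\in\Lambda$ this shows $\phi(v)\in\cp$, whence $\phi(v)\in\cp\cap\prod_{i<j}(c-3d,\,c+3d]=\C$. Therefore $|\cp_u|=|\phi(\cp_u)|\le|\C|$.

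I do not expect a genuine obstacle here: the only place the hypotheses of Proposition~\ref{prop:balanced} are actually used is the scale separation $\sqrt{n\log n}=o(n)$ (with implied constant depending on $r$), which yields $c>3d$ and thereby keeps $\phi(v)$ inside the nonnegative orthant. The conceptual content is simply that sliding the small cube $\cp_u$ by the single lattice vector $w=v^\ast-p$ brings it into the width-$6d$ central slab $\prod_{i<j}(c-3d,\,c+3d]$ without ever leaving $\cp$.
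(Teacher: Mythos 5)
Your proof is correct and is essentially the same argument as the paper's: pick a fixed $p\in\cp_0$ and $v^\ast\in\cp_u$ and translate by $v^\ast-p$ (the paper writes this map as $f(v)=\vc+(v-v')$), using that such a translation preserves the linear equalities of (PM) and that $c>3d$ keeps coordinates nonnegative. The only cosmetic differences are that you name the lattice $\Lambda$ explicitly and spell out the identity $\C=\cp\cap\prod_{i<j}(c-3d,c+3d]$, which the paper uses implicitly.
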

	
	\begin{proof}
		By Lemma~\ref{lem:nonempty} there exits some $\vc\in \cp_0$.  We claim that for all $v,v'\in \cp_u$, we have $\vc+(v-v')\in \C$.
		
		Indeed, since $v,v'\in \cp_u\subseteq \B_u$, we have $|v-v'|_\infty< 2d$, and hence 
		\[ |\vc+v-v'-c|_\infty\le |\vc-c|_\infty+|v-v'|_\infty< 3d.\]
		To complete the claim, it remains to show that $\vc+v-v'$ satisfies (PM). We have $(\vc+v-v')_{i,j}\ge 0$ by the inequality above and $c\ge 3d$ for $n$ sufficiently large in terms of $r$. Since $v,v'\in \cp$, for any $i$ we have 
		\[ \sum_{j\in [r]\setminus \{i\}} v_{i,j}=\sum_{j\in [r]\setminus \{i\}} v'_{i,j}=v_i,\]
		so $\vc\in \cp$ implies
		\[ \sum_{j\in [r]\setminus \{i\}} (\vc+v-v')_{i,j}=\sum_{j\in [r]\setminus \{i\}} \vc_{i,j}=v_i,\]
		proving the claim.
		
		If $\cp_u=\emptyset$ then there is nothing to prove.  Otherwise, fix some arbitrary $v'\in \cp_u$ and define the map $f(v)=\vc+(v-v')$.  Note that $f$ is injective and maps $\cp_u$ to $\C$ by our claim, giving the result.
	\end{proof}

	We will need the following technical lemma.
	\begin{lemma}\label{lem:optimise}
		Let $x_1\ge \ldots \ge x_t\ge 0$ and $y_1\ge \ldots, y_t\ge 0$ be non-negative integers such that $\sum_{i=1}^t x_i=\sum_{i=1}^t y_i:=S$. Let $c=\floor{S/t}$ and suppose that $|x_i-c|\le k$ for all $i\in [t]$ and $y_1\ge c+2\delta$. Then
		\[ \prod_{i=1}^t \frac{x_i!}{y_i!}\le \exp\left( -\frac{\delta(\delta-1)}{c+\delta} + \frac{2k^2t}{c-k} \right). \]
	\end{lemma}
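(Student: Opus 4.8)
The plan is to take logarithms and bound $\sum_i\log(x_i!)$ from above and $\sum_i\log(y_i!)$ from below, the whole argument resting on the discrete convexity of $m\mapsto\log(m!)$ — concretely, on the elementary inequality $\log(a!)-\log(c!)\ge(a-c)\log c$, valid for all integers $a\ge 0$ and $c\ge 1$. (Write the difference as a signed sum of logarithms of consecutive integers, all $\ge c+1$ if $a>c$ or all $\le c$ if $a\le c$, and compare each term with $\log c$.) Throughout we may assume $c>k$ and $\delta\ge 1$, the remaining cases being trivial.

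For the upper bound I would write $x_i=c+e_i$ with $\abs{e_i}\le k$; since $c=\floor{S/t}$, we have $\sum_i e_i=S-tc=:\rho\in[0,t)$. Using $\log(1+u)\le u$ and $\log(1-u)\ge-u/(1-u)$ one gets $\log(x_i!)-\log(c!)\le e_i\log c+(e_i^2+\abs{e_i})/(2(c-k))$ for each $i$, so summing and using $e_i^2+\abs{e_i}\le 2k^2$ yields $\sum_i\log(x_i!)\le t\log(c!)+\rho\log c+tk^2/(c-k)$. For the lower bound I would isolate the large coordinate: with $D:=y_1-c\ge 2\delta$ a nonnegative integer, $\log(y_1!)=\log(c!)+\sum_{j=1}^{D}\log(c+j)$ exactly, while the displayed inequality applied to $y_2,\dots,y_t$ (whose sum is $(t-1)c-(D-\rho)$) gives $\sum_{i\ge 2}\log(y_i!)\ge(t-1)\log(c!)-(D-\rho)\log c$. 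Adding these and using $\log(c+j)=\log c+\log(1+j/c)$, the $\log c$ terms recombine to exactly $\rho\log c$, and the $t\log(c!)$ and $\rho\log c$ terms cancel against the upper bound, so that
\[ \sum_i\log(x_i!)-\sum_i\log(y_i!)\ \le\ \f{2k^2t}{c-k}-\sum_{j=1}^{D}\log\lrpar{1+\f{j}{c}} .\]

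It then remains to check $\sum_{j=1}^{D}\log(1+j/c)\ge\delta(\delta-1)/(c+\delta)$. Termwise, $\log(1+u)\ge u/(1+u)$ gives $\sum_{j=1}^{D}\log(1+j/c)\ge\sum_{j=1}^{D}\f{j}{c+j}\ge\f{D(D+1)}{2(c+D)}$, which is increasing in $D$ and hence, since $D\ge 2\delta$, at least $\f{\delta(2\delta+1)}{c+2\delta}$; cross-multiplying this against $\f{\delta(\delta-1)}{c+\delta}$ leaves the positive quantity $\delta c+2c+3\delta$, so the inequality holds, and exponentiating finishes the proof. I expect the only step requiring an idea rather than routine estimation to be the lower-bound decomposition: peeling off precisely the coordinate $y_1$, handling it by the exact factorial identity, controlling the remaining coordinates by the crude convexity bound, and then noticing that the $\rho\log c$ contributions cancel between the two sides, so that exactly the needed ``gain'' $\sum_{j=1}^{D}\log(1+j/c)$ survives.
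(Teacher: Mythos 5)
Your proof is correct, and it takes a genuinely different route from the paper's. The paper proceeds by a discrete exchange argument: it starts from a near-constant profile $\gamma^{(0)}$ with $\sum_i \gamma_i^{(0)}=S$ and entries in $\{c,c+1\}$, moves one unit at a time (increment the smallest coordinate still below its target $z_i$, decrement the largest still above), and bounds the multiplicative change in $\prod_i \gamma_i^{(\ell)}!$ at each step — once for $(x_i)$ to get the factor $\exp\bigl(2k^2t/(c-k)\bigr)$, and once for $(y_i)$ to get $\exp\bigl(-\delta(\delta-1)/(c+\delta)\bigr)$, using the same base profile $\Gamma^{(0)}$ for both. You instead take logarithms and lean on the discrete convexity of $m\mapsto\log(m!)$: a quadratic upper envelope at $c$ for the $x_i$'s, and for the $y_i$'s the exact identity at the large coordinate $y_1$ together with the linear lower envelope $\log(a!)-\log(c!)\ge(a-c)\log c$ for the rest. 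The key observation that the $\rho\log c$ contributions cancel on the two sides is the analytic counterpart of the paper's use of the common profile $\Gamma^{(0)}$. The two approaches have essentially the same strength; you in fact obtain the slightly sharper $tk^2/(c-k)$ before relaxing to the stated $2k^2t/(c-k)$. Both arguments implicitly assume $c>k$ and $\delta\ge 1$ (the paper's $(c+k)/(c-k)$ and $\ell=\delta,\dots,2\delta-1$ steps need these just as your estimates do), so your caveat at the start is a shared and harmless omission given how the lemma is applied.
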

	\begin{proof}
		In order to estimate $\prod_{i=1}^t z_i!$ for any $z_1\ge\ldots\ge z_t\ge 0$ with $\sum_{i=1}^t z_i=S$, we will iteratively construct sequences of integers $\gam_1^{(\ell)}\ge \cdots \ge \gam_t^{(\ell)}$ with $\sum_{i=1}^t \gam_i^{(\ell)}=S$ such that $\gam_i^{(0)}$ is as close to $c$ as possible for all $i$, and such that  $\gam_i^{(\ell)}=z_i$ for the final value of $\ell$.  By keeping track of how much $\prod_{i=1}^t \gam_i^{(\ell)}!$ changes at each step, we will be able to effectively estimate $\prod_{i=1}^t z_i!$.
		
		Fix an integer sequence $z_1\ge\ldots\ge z_t\ge 0$ with $\sum_{i=1}^t z_i=S$, and let $c+1\ge \gamma_1^{(0)}\ge \dots \gamma_t^{(0)}\ge c$ be integers such that $\sum_{i=1}^t \gamma_i^{(0)}=S$. Observe that $\gam_i^{(0)}$ has the property that there is a $j\in [t]$ such that $z_i\ge \gamma_i^{(0)}$ for all $i\le j$ and $z_i< \gamma_i^{(0)}$ for all $i>j$. Iteratively given $\gam_i^{(\ell-1)}$ which continues to satisfy this property,   we pick the smallest index $i_1$ such that $\gamma_{i_1}^{(\ell-1)}<z_{i_1}$ and the largest index $i_2$ such that $\gamma_{i_2}^{(\ell-1)}>z_{i_2}$. We then set
		$\gamma_{i_1}^{(\ell)}=\gamma_{i_1}^{(\ell-1)}+1$, $\gamma_{i_2}^{(\ell)}=\gamma_{i_2}^{(\ell-1)}-1$, and keep $\gam_i^{(\ell)}=\gam_i^{(\ell-1)}$ for all other $i$; noting that with this, $\gam_i^{(\ell)}$ satisfies the desired property and $\sum_{i=1}^t \gam_i^{(\ell)}=S$.  We terminate this process when $\gamma_i^{(\ell)}=z_i$ for all $i$, and we let $n(z_1,\dots, z_t)$ be the step at which we terminate. Define 
		\[ \Gamma(z_1,\dots, z_t)^{(\ell)}:=\Gamma^{(\ell)}= \prod_{i=1}^t \gamma_i^{(\ell)}!, \]
		and we make the simple observations that $\Gamma^{(\ell)}\ge \Gamma^{(\ell-1)}$ and $\Gamma^{(n(z_1,\ldots,z_t))}=\prod_{i=1}^t z_i!$. 
		
		By using this approach for $x_1,\dots, x_t$, observing that $n(x_1,\dots, x_t)\le kt$ and $\Gamma^{(\ell)}/\Gamma^{(\ell-1)}\le (c+k)/(c-k)$ for all $\ell$, we have 
		\begin{equation}\label{eq:opt1}
			\frac{\prod_{i=1}^t x_i!}{\Gamma^{(0)}}\le \lrpar{\frac{c+k}{c-k}}^{kt}\le \exp\lrpar{ \frac{2k^2t}{c-k} }.
		\end{equation}
		
		Similarly, by using this approach for $y_1,\dots, y_t$ and observing that for $\ell=\delta,\delta+1,\dots, 2\delta-1$ we have $\Gamma^{(\ell)}/\Gamma^{(\ell-1)}\ge (c+\delta)/(c+1)$ (since we will increase the value of $\gamma_1^{(\ell-1)}\ge c+\delta-1$ by 1 and decrease the value of $\gamma_{i_2}^{(\ell-1)}\le c+1$ by 1 for some $i_2$), and hence
		\begin{align}\label{eq:opt2}
			\frac{\Gamma^{(0)}}{\prod_{i=1}^t y_i!}\le \prod_{\delta\le \ell< 2\delta} \frac{\Gamma^{(\ell-1)}}{\Gamma^{(\ell)}} \le  \lrpar{\frac{c+1}{c+\delta}}^\delta\le \exp\lrpar{-\frac{\delta(\delta-1)}{c+\delta}}.
		\end{align}
		Combining \eqref{eq:opt1} and \eqref{eq:opt2} gives the desired result. 
		
	\end{proof}
	For any $v=(v_{i,j})_{i<j\in [r]}\in \cp$ and $\mathcal{S}\subseteq \cp$, let 
	\begin{align*}
		\phi(v)&:=\Pr(\text{uniform random perfect matching $R$ on $G$ has }v_{i,j}\text{ edges between }V_i,V_j\ \forall i<j), \\
		\phi(\mathcal{S})&:=\sum_{v\in \mathcal{S}} \phi(v).
	\end{align*}
	Note that
	\begin{equation} \phi(v)\propto \prod_{i\in [r]} \binom{v_i}{v_{i,1},\dots, v_{i,r}} \prod_{i<j} v_{i,j}!\propto \prod_{i<j} \frac{1}{v_{i,j}!}.\label{eq:weight}\end{equation}
	\begin{lemma}\label{lem:singleratio}
		For any $u\in \Z^{\binom{r}{2}}, |u|_1\ge 100r^4$, $v\in \cp_u, v'\in \C$, we have
		\[ \frac{\phi(v)}{\phi(v')}\le n^{-50r^8}. \]
	\end{lemma}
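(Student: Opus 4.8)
The starting point is \eqref{eq:weight}, which gives $\f{\phi(v)}{\phi(v')}=\prod_{i<j}\f{v'_{i,j}!}{v_{i,j}!}$, so everything reduces to bounding this product, and we want to exploit that $v$ is far less balanced than $v'$. Since $v,v'$ both satisfy (PM) with the same part sizes we have $\sum_{i<j}v_{i,j}=\sum_{i<j}v'_{i,j}=:S$, and one checks $\bar v:=S/\binom r2=c+O(\log n)=c+o(d)$; moreover $v'\in\C$ forces each $v'_{i,j}$ to be within $3d$ of $c$. The hypothesis $|u|_1\ge 100r^4$ enters through the fact that $v\in\B_u$ then has large total displacement from the balanced vector: $\sum_{i<j}|v_{i,j}-c|\ge(2|u|_1-\binom r2)d=\Omega(r^4 d)$, and since $\sum_{i<j}(v_{i,j}-c)=S-\binom r2 c=O(\log n)$ is negligible, also $\sum_{i<j}|v_{i,j}-\bar v|=\Omega(r^4 d)$.

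Next I would write $\log\f{\phi(v)}{\phi(v')}=\sum_{i<j}\bigl(g(v'_{i,j})-g(v_{i,j})\bigr)$, where $g(x)=\log\Gamma(x+1)$ is convex with $g''(x)\ge\tfrac1{2x}$ for $x\ge 1$, and expand each summand to second order about $\bar v$. The first-order terms cancel because $\sum_{i<j}(v_{i,j}-\bar v)=\sum_{i<j}(v'_{i,j}-\bar v)=0$; the second-order remainder from $v'$ is $O(r^3\log n)$ since each $|v'_{i,j}-\bar v|=O(d)$ and each $v'_{i,j}=\Theta(n/r)$; and the remainder from $v$ equals $-\tfrac12\sum_{i<j}g''(\xi_{i,j})(v_{i,j}-\bar v)^2\le-\tfrac14\sum_{i<j}\f{(v_{i,j}-\bar v)^2}{\max(v_{i,j},\bar v)}$, with $\xi_{i,j}$ between $\bar v$ and $v_{i,j}$. (Equivalently, the same estimate can be packaged via Lemma~\ref{lem:optimise}, taking the $x_i$ to be the sorted entries of $v'$ and the $y_i$ the sorted entries of $v$, and strengthening the argument of that lemma so that every entry of $v$ exceeding $c$ is credited, not merely the largest one.) Hence
\[\log\f{\phi(v)}{\phi(v')}\le O(r^3\log n)-\tfrac14\sum_{i<j}\f{(v_{i,j}-\bar v)^2}{\max(v_{i,j},\bar v)}.\]

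To finish, first note that if some $v_{i_0,j_0}\ge 2\bar v$ then that single term of the sum is already $\ge\tfrac14\cdot\f{(v_{i_0,j_0}-\bar v)^2}{v_{i_0,j_0}}\ge\tfrac1{16}\bar v=\Omega(n/r)$, which for $n$ large exceeds $50r^8\log n+O(r^3\log n)$ (recall $r$ is fixed); so we may assume $v_{i,j}\le 2\bar v$ for all $i<j$. In that regime $\max(v_{i,j},\bar v)\le 2\bar v$, so Cauchy--Schwarz over the $\binom r2$ coordinates yields
\[\tfrac14\sum_{i<j}\f{(v_{i,j}-\bar v)^2}{\max(v_{i,j},\bar v)}\ge\f{1}{8\bar v}\sum_{i<j}(v_{i,j}-\bar v)^2\ge\f{\bigl(\sum_{i<j}|v_{i,j}-\bar v|\bigr)^2}{8\bar v\binom r2},\]
and substituting the bound $\Omega(r^4 d)$ from the first paragraph together with $d=\floor{\sqrt{n\log n}}$ and $c=\floor{n/(r-1)}$ shows the right-hand side exceeds $50r^8\log n+O(r^3\log n)$ for all $n$ sufficiently large in terms of $r$; this gives $\f{\phi(v)}{\phi(v')}\le n^{-50r^8}$.

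The genuinely delicate part is this last quantitative step: one must push the Cauchy--Schwarz bound hard enough --- its loss is a factor $\binom r2$, which has to be beaten by the gains that $|u|_1\ge 100 r^4$ and $c=\Theta(n/r)$ provide --- so that the resulting exponent really does dominate $50r^8$, and one must carefully absorb the $o(d)$ gap between $c$ and $S/\binom r2$, the various floor functions, and any zero entries of $v$. The rest --- the reduction via \eqref{eq:weight}, the Taylor expansion and cancellation of first-order terms, and the crude estimate in the case that $v$ has a giant entry --- is routine bookkeeping.
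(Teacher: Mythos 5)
Your approach — centering at $\bar v = S/\binom r2$, Taylor-expanding $g(x)=\log\Gamma(x+1)$ to second order with the first-order terms cancelling, and then using Cauchy--Schwarz to convert $\sum|v_{i,j}-\bar v|$ into $\sum(v_{i,j}-\bar v)^2$ — is a legitimate route and in fact cleaner than the paper's bookkeeping with Lemma~\ref{lem:optimise}. The reduction via \eqref{eq:weight}, the lower bound $\sum_{i<j}|v_{i,j}-c|\ge(2|u|_1-\binom r2)d$, the $O(r^3\log n)$ estimate for the $v'$-remainder, and the separate treatment of the case $\max v_{i,j}\ge 2\bar v$ are all correct.

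The gap is exactly in the step you flagged as "the genuinely delicate part." Cauchy--Schwarz gives
\[\frac{1}{8\bar v}\sum_{i<j}(v_{i,j}-\bar v)^2\ \ge\ \frac{\bigl(\sum_{i<j}|v_{i,j}-\bar v|\bigr)^2}{8\bar v\binom r2}\ \gtrsim\ \frac{(r^4 d)^2}{\,8\cdot\frac{n}{r-1}\cdot\frac{r(r-1)}{2}\,}\ =\ \Theta\!\lrpar{\frac{r^8 d^2}{nr}}\ =\ \Theta\lrpar{r^7\log n},\]
with an explicit absolute constant of order $10^4$. That is $\Theta(r^7\log n)$, not $\ge 50r^8\log n$: the $\binom r2$ loss in Cauchy--Schwarz eats one full power of $r$ that the $|u|_1\ge 100r^4$ hypothesis and the $\bar v=\Theta(n/r)$ gain do not pay back. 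For $r$ beyond a few hundred your exponent falls short of the claimed $50r^8$, so the lemma as stated is not proven. (The conclusion $n^{-\Theta_r(1)}$ would still suffice for the application in Proposition~\ref{prop:balanced}, since one only needs $n^{\binom r2}\cdot n^{-C(r)}\to 0$, but a proof must meet the stated bound.)

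The paper avoids this loss by \emph{not} spreading the deviation over all $\binom r2$ coordinates: it observes that $\sum_{i<j}u_{i,j}\ge -\binom r2/4+o(1)$ (forced by $\sum v_{i,j}=S$), so the positive part of $u$ carries essentially all of $|u|_1$, yielding $\sum_{u_{i,j}\ge 0}u_{i,j}\ge 49r^4$ and hence a \emph{single} coordinate $v_{i_0,j_0}\ge S/\binom r2+2\delta$ with $\delta$ on the order of $r^4 d$ (after dividing by $\binom r2$, the paper's bound $\max u_{i,j}\ge 49r^4$ should more precisely read $\ge 49r^4/\binom r2$, but the single-coordinate structure is the point). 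Lemma~\ref{lem:optimise} then charges all of $\delta^2/c$ to that one coordinate, with no $\binom r2$ loss. To repair your argument along the same lines, replace the global Cauchy--Schwarz by first showing that some single $u_{i_0,j_0}$ is positive and large (using the near-nonnegativity of $\sum u_{i,j}$), and then credit the quadratic drop from that coordinate alone — your Taylor-expansion framework handles this case just as well and recovers the missing factor of $r$.
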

	One can easily improve the bound of this lemma with a more careful argument, but we do not make any attempt at optimizing our bound beyond what is needed.
	\begin{proof}
		First we note that
		\begin{align*}
			\sum_{i<j} v_{i,j}&=\sum_{i<j} v_{i,j}'=\frac{1}{2}\sum_{i} v_i=:S.
		\end{align*}
		Since $S=\frac{rn}{2}+O(\log n)$, we have
		\begin{align*}	\abs{v'_{i,j}-S/\binom{r}{2}}&\le 3d+O(\log n)=:k \quad \forall\ i<j,
		\end{align*}
		and that
		\begin{align*}
			\frac{nr}{2}+O(\log n)=\sum_{i<j} v_{i,j}&\le \frac{nr}{2}+2d\sum_{i<j} u_{i,j}+d\frac{r(r-1)}{2}.
		\end{align*}
		This implies \[\sum_{i<j} u_{i,j}\ge-\frac{r(r-1)}{4}+o(1) .\]
		Using this together with $|u|_1\ge 100r^4$ gives 
		\[ \sum_{i<j, u_{i,j}\ge 0} u_{i,j}\ge 50r^4- \frac{r(r-1)}{8}+o(1)\ge 49r^4.\]
		Therefore, $\max_{i,j} u_{i,j}\ge 49r^4$, and hence a crude bound gives
		\[ \max_{i<j} v_{i,j}\ge c+2 \max_{i<j} u_{i,j}d-d\ge  S/\binom{r}{2}+96r^4d=:S/\binom{r}{2}+2\delta.\]
		Therefore by \eqref{eq:weight} and Lemma~\ref{lem:optimise} and some crude estimates, 
		\begin{align*}
			\frac{\phi(v)}{\phi(v')}= \prod_{i<j} \frac{v_{i,j}'!}{v_{i,j}!}
			&\le\exp\left(-\frac{\delta(\delta-1)}{c+\delta} + \frac{2k^2\binom{r}{2}}{c-k}\right)\\
			&\le  \exp( -100r^8\log n + 18  r^3 \log n )\\
			&\le \exp(-50 r^8 \log n)
		\end{align*}
		which gives the desired result.
	\end{proof}
	We now have all we need to prove Proposition~\ref{prop:balanced}.
	
	\begin{proof}[Proof of Proposition~\ref{prop:balanced}]
		By Lemmas~\ref{lem:densecenter} and \ref{lem:singleratio}, for any $u$ with $|u|_1\ge 100r^4$,
		\[ \frac{\phi(\cp_u)}{\phi(\C)}\le \f{\sum_{v\in \cp_u}\phi(v)}{|\cp_u|\min_{v'\in \cp_u}\phi(v')} \le n^{-50r^8}.\]
		Since the number of $u$ such that $\cp_u$ is non-empty is (very crudely) at most $n^{\binom{r}{2}}$, the inequality above gives
		\[ \sum_{u: |u|_1\ge 100r^4} \frac{\phi(\cp_u)}{\phi(\C)}\le n^{-r^8}, \]
		which concludes the proof.
	\end{proof}
	Finally, we prove Proposition~\ref{prop:balancedswitch}, which we restate for convenience.
	\begin{propn}[\ref*{prop:balancedswitch}]
		Let $r\ge 2$ fixed, $n$ such that $r$ divides $2n$, and $M$ an arbitrary perfect matching on $G=K_{r\times 2n/r}$. Let $1\le k\le\log n$ and let $Q$ be a uniform random perfect matching on $G$ that uses exactly $k-1$ edges of $M$. Then the following holds with high probability: for any $i\in [r]$ the number of vertices $x\in V_i$ such that $Q(M(x))\in V_i$ is $\frac{2n}{r(r-1)}+O(\sqrt{n\log n})$.
	\end{propn}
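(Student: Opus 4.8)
The plan is to reduce Proposition~\ref{prop:balancedswitch} to Proposition~\ref{prop:balanced} by conditioning on the $k-1$ edges of $M$ that $Q$ uses. Concretely, let $Q$ be a uniform random perfect matching of $G = K_{r \times 2n/r}$ using exactly $k-1$ edges of $M$. First I would expose which $k-1$ edges of $M$ are used: condition on the event that $Q \cap M = \{e_1, \dots, e_{k-1}\}$ for a fixed set of edges $e_1,\dots,e_{k-1} \in M$. Given this conditioning, $Q$ restricted to the remaining $2n - 2(k-1)$ vertices is a uniform random perfect matching of $G' := G - V(\{e_1,\dots,e_{k-1}\})$ that uses \emph{no} edge of the remaining matching $M' := M \setminus \{e_1,\dots,e_{k-1}\}$ on those vertices. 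Since $k \le \log n$, each part of $G'$ has size $\frac{2n}{r} - O(\log n)$, so $G'$ is a complete $r$-partite graph whose parts are all of size $n' - O(\log n)$ after rescaling (taking $n'$ to be $\approx \frac{2n}{r}$, matching the remark that the $n$ of Proposition~\ref{prop:balanced} is $\approx \frac{2}{r}$ times the $n$ here).

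Next I would remove the constraint "$Q$ avoids $M'$". The cleanest route is a second switching/comparison argument: a uniform random perfect matching of $G'$ \emph{conditioned to avoid $M'$} and a genuinely uniform random perfect matching of $G'$ have total variation distance $o(1)$ in terms of the relevant statistic, because avoiding a single perfect matching costs only a constant factor (indeed, by \eqref{eq:deranged}-type reasoning or directly by Lemma~\ref{lem:switchLarge} applied with $k=1$, $|\N_0|/\PM(G')$ is bounded away from $0$), and a constant-factor change in measure cannot blow up a whp statement. Alternatively — and perhaps more cleanly — one observes that the proof of Proposition~\ref{prop:balanced} only used \eqref{eq:weight}, i.e. that $\phi(v) \propto \prod_{i<j} 1/v_{i,j}!$, and this proportionality is insensitive to forbidding one fixed matching: restricting the sample space to matchings avoiding $M'$ changes $\phi(v)$ by at most a bounded multiplicative factor uniformly in $v$ (any perfect matching with a given edge-count vector $v$ can be compared to one avoiding $M'$ by finitely many rotations), so Lemmas~\ref{lem:densecenter}, \ref{lem:singleratio} go through verbatim with worse but still adequate constants. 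I would phrase this so that the whp conclusion of Proposition~\ref{prop:balanced} — namely that the number of edges of the random matching between $V_i$ and $V_j$ is $\frac{n'}{r-1} + O(\sqrt{n'\log n'})$ for all $i,j$ — transfers to $Q$ restricted to $G'$.

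Finally I would unwind the bookkeeping. For $x \in V_i$, the condition $Q(M(x)) \in V_i$ says: letting $y = M(x)$ (so $y \in V_{j}$ for some $j \ne i$, unless $xy$ is one of the exposed edges, but there are only $O(\log n)$ such vertices), the $Q$-partner of $y$ lies back in $V_i$. Summing over $x$, the number of such $x \in V_i$ is essentially $\sum_{j \ne i} (\text{number of } M\text{-edges from } V_i \text{ to } V_j \text{ whose } V_j\text{-endpoint is } Q\text{-matched into } V_i)$. Since $M$ itself is an arbitrary but fixed perfect matching, the $M$-edges from $V_i$ to $V_j$ hit a fixed set of $|V_j|\cdot(\text{something}) $ vertices in $V_j$; but the point is that, restricted to $G'$, the $Q$-matching sends a $\frac{1}{r-1} + o(1)$ fraction of each $V_j$ into each other part, and in particular into $V_i$, with the $O(\sqrt{n\log n})$ error from Proposition~\ref{prop:balanced}, and this holds \emph{simultaneously for the specific vertex subset} of $V_j$ picked out by $M$ only if I strengthen the statement slightly. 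This is the step I expect to be the main obstacle: Proposition~\ref{prop:balanced} as stated controls only the \emph{total} number of $Q$-edges between $V_i$ and $V_j$, not how many land inside an adversarially chosen subset of $V_j$. I would resolve this by noting that the quantity I want, $\#\{x \in V_i : Q(M(x)) \in V_i\}$, equals $\#\{$edges of $Q$ between $V_i$ and $M(V_i) \cap V_i$-type configurations$\}$ — more precisely, it counts edges of $Q \cup M$ in certain $2$-colored components — and reorganizing, $\#\{x \in V_i : Q(M(x)) \in V_i\}$ is exactly the number of edges of $Q$ lying between $V_i$ and $\bigcup_{j \ne i} (V_j \cap M(V_i))$; since $M$ is a perfect matching, $M(V_i)$ meets each $V_j$ in a set whose size is \emph{not} under our control, so instead I would sum: $\#\{x \in V_i : Q(M(x)) \in V_i\} = \sum_{j\ne i} e_Q(M^{-1}(V_j) \cap V_i,\ V_i)$ and recognize via the symmetry of counting $Q$-edges by their two endpoints that this telescopes to exactly $\sum_{j \ne i} e_Q(V_i, V_j)$-type terms once one accounts for $M$ being a \emph{bijection} $V_i \cap M^{-1}(V_j) \leftrightarrow V_j \cap M^{-1}(V_i)$ — so the adversarial subset on one side is matched bijectively by $M$ to a subset on the other, and the count $\#\{x\in V_i: Q(M(x))\in V_i\}$ equals $\sum_{j\neq i}(\text{number of }Q\text{-edges from }V_i\text{ to }V_j\text{ with }V_j\text{-endpoint in }M(V_i))$. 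To get a clean bound I would either (a) upgrade Proposition~\ref{prop:balanced} to a statement about $Q$-edges between prescribed \emph{subsets} of parts (its proof via \eqref{eq:weight} localizes cleanly since the weight only depends on cross-part edge counts — this would require refining $\cp$ to track subset-edge-counts, a real but routine extension), or (b) observe that $\#\{x\in V_i : Q(M(x))\in V_i\}$ admits a direct switching analysis paralleling Lemma~\ref{lem:switchLarge}, swapping pairs of $Q$-edges to adjust this count by $\pm 1$ and showing the distribution concentrates. Option (a) keeps everything within the framework already built, so that is the route I would pursue, with the subset-refinement of Proposition~\ref{prop:balanced} being the only genuinely new computation.
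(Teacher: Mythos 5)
Your proposal mirrors the paper's proof up to the final bookkeeping step: the conditioning on $Q\cap M = M^*$, the reduction to $G'=G-V(M^*)$ with $Q'$ uniform over perfect matchings of $G'$ avoiding $M'$, and the idea that forbidding a single perfect matching changes the distribution of the cross-count vector $S(\cdot)$ by only a bounded factor. One caution on that middle step: the ``constant-factor change of measure'' justification, as you phrase it, requires a bound on $\PM(G')/\PM(G'-M')$, which for fixed $r$ is essentially the main theorem applied to a slightly smaller instance and hence risks circularity. The paper avoids this by comparing at the level of cross-count vectors only: it shows $\PM(G',S)\le \alpha^{\binom r2}\PM(G'-M',S)$ for each achievable $S$, with $\alpha=\sup_{N\ge 2}N!/d_N<\infty$ coming directly from~\eqref{eq:deranged}, which is elementary and non-circular. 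Your parenthetical ``\eqref{eq:deranged}-type reasoning'' gestures at this, but it needs to be made pointwise in $S$ rather than about the whole partition function.

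The genuine gap is at the end, and you locate it exactly right: Proposition~\ref{prop:balanced} controls only the totals $e_Q(V_i,V_j)$, not how $Q$'s edges interact with the specific vertex subsets carved out by $M$. However, your resolution~(a)---refining $\cp$ to track subset-edge-counts---is not routine: once each $v_{i,j}$ is split into four pieces according to membership in $A\subseteq V_i$ and $B\subseteq V_j$, the weight $\phi(v)\propto\prod_{i<j}1/v_{i,j}!$ is replaced by a more involved multinomial depending on $|A|,|B|$, and Lemmas~\ref{lem:densecenter}, \ref{lem:optimise}, and~\ref{lem:singleratio} would all have to be reworked in a higher-dimensional state space with new cancellations to verify; option~(b) is a separate argument you have not developed. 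The paper sidesteps all of this with a symmetry observation you should keep in your toolkit: conditioned on $|R_{i,j}|=m$, the set $R_{i,j}$ is a \emph{uniformly random} $m$-subset of $V_i'$, because permuting $V_i'$ is an automorphism of the complete $r$-partite graph $G'$ and hence preserves the law of $R$. Consequently $|M'_{i,j}\cap R_{i,j}|$ is hypergeometric and concentrates at $\tfrac{|M'_{i,j}|}{r-1}+O(\sqrt{n\log n})$ by a standard tail bound. Transferring this to $Q'$ via the derangement-ratio claim and summing over $j\ne i$ finishes the proof without touching Proposition~\ref{prop:balanced} itself.
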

	\begin{proof}
		The result is trivial if $r=2$, so we assume $r\ge 3$.  We prove the following stronger statement: let $M^*$ be a subset of $M$ with $|M^*|=k-1\le \log n$, and  
		let $Q$ be a uniform random perfect matching on $G$ that intersects $M$ at exactly $M^*$.  Then the conclusion of Proposition~\ref{prop:balancedswitch} still holds.
		
		Let $G'$ be the graph obtained by removing the endpoints of $M^*$ in $G$, leaving the complete $r$-partite graph with vertex set $V_1',\dots, V_r'$. Let $Q'$ be the restriction of $Q$ to $G'$, let $M'$ be the restriction of $M$ to $G$, and let $R$ be a uniform random perfect matching on $G'$.  It is not too difficult to see that $Q'$ is a uniform random perfect matching on $G'$ that avoids $M'$.
		
		For any matching $P$ on $G'$, we define $P_{i,j}\subseteq V_i'$ to be the set of vertices $x\in V_i'$ such that $P(x)\in V_j'$. We note that the graph $G'$ satisfies the conditions of Proposition~\ref{prop:balanced} (with $n$ replaced by $2n/r$). Therefore, for any $i< j\in [r]$, with high probability $|R_{i,j}|=\frac{2n}{r(r-1)}+O(\sqrt{n\log n})$. By symmetry, we know that if we condition on $|R_{i,j}|=m$, then $R_{i,j}$ is the uniform random subset of $V'_i$ with size $m$. Combining these two facts, from standard concentration inequalities (see~\mbox{\cite[Theorem~2.1 and~2.10]{JLR}}) we get that with high probability
		\begin{equation} |M'_{i,j}\cap R_{i,j}|= \frac{|M'_{i,j}|}{r-1} +O(\sqrt{n\log n}). \label{eq:MT} \end{equation}
		
		Given a perfect matching $P$ of $G'$, let 
		\[ S(P)=(P_{i,j})_{i\neq j \in [r]},\]
		and let $\mathcal{S}=\{S(P):P\subseteq G'-M'\}$.
		\begin{claim}
			There exists a constant $C_r$ such that for all $S\in \mathcal{S}$, we have 
			\[ \frac{ \Pr(S(Q')=S) }{\Pr(S(R)=S)}\le C_r. \]
		\end{claim}
		\begin{proof}
			Recall that $d_N$ is the number of derangements of order $N$, and define 
			\[\alpha=\sup_{N\ge 2} \frac{N!}{d_N}.\]
			Note that $\alpha<\infty$ since we restrict to $N\ge 2$ and since $\lim_{n\to \infty} \frac{d_N}{N!}=e^{-1}$ by \eqref{eq:deranged}, and that $\alpha\ge 1$.  We will prove our result with $C_r=2\alpha^{r\choose 2}$.
			
			To this end, for $S\in \mathcal{S}$ we let $\PM(G',S)$ denote the number of perfect matchings $U$ of $G'$ with $S(U)=S$, and we similarly define $\PM(G'-M',S)$.  We claim that 
			\[\PM(G',S)\le \alpha^{r\choose 2}\PM(G'-M',S).\]
			Indeed, we have $\PM(G',S)=\prod_{i<j} |S_{i,j}|!$.  Because $S\in \mathcal{S}$, there exists some perfect matching $P\subseteq G'-M'$ with $S(P)=S$.  In particular, for all $i\ne j$ we either have $|S_{i,j}|\ge 2$, or we have $S_{i,j}=\{x\},S_{j,i}=\{y\}$ with $xy\notin M'$ (as otherwise no such $P\subseteq G'-M'$ could exist).  Letting $d_{i,j}=1$ if this latter case happens and otherwise setting $d_{i,j}=d_{|S_{i,j}|}$, we see that \[\PM(G'-M',S)\ge \prod_{i<j} d_{i,j},\]
			as there are at least $d_{i,j}$ ways to choose the matching between $S_{i,j}$ and $S_{j,i}$ while avoiding $M$.  By definition we have $|S_{i,j}|!/d_{i,j}\le \alpha$ for all $i,j$, and from this the subclaim follows.

            For any perfect matching $P$ of $G'$, $S(P)\notin \mathcal{S}$ only if $P_{i,j}=1$ for some $i,j$. Therefore by Proposition~\ref{prop:balanced}, $S(R)\in \mathcal{S}$ with high probability.
			To finish the proof, we observe that
			\begin{align*}
			    \Pr(S(Q')=S)=\frac{\PM(G'-M',S)}{\sum_{S'\in \mathcal{S}}\PM(G'-M',S')}\le \frac{\PM(G',S)}{\alpha^{-\binom{r}{2}}\sum_{S'\in \mathcal{S}}\PM(G',S')} &=\alpha^{\binom{r}{2}} \Pr(S(R)=S|S(R)\in \mathcal{S}) \\ 
       &\le C_r\Pr(S(R)=S).
			\end{align*}
		\end{proof}
		With this claim and \eqref{eq:MT}, we have with high probability
		\[ |M'_{i,j}\cap Q'_{i,j}|= \frac{|M'_{i,j}|}{r-1} +O(\sqrt{n\log n}).\]
		This gives the desired result, as the number of vertices $x\in V_i$ such that $Q(M(x))\in V_i$ is (deterministically)
		\[ \sum_{j\neq i} |M'_{j,i}\cap Q'_{j,i}| \pm (k-1),\]
		which is $2n/(r(r-1))+O(\sqrt{n\log n})$ with high probability since $\sum_{j\ne i}|M'_{j,i}|=|V'_i|=2n/r-O(\log n)$.
	\end{proof}

	\section{Further Directions}\label{sec:conclusion}
	In this paper we proved Conjecture~\ref{conj:main} by utilizing switching arguments, and there are a number of extensions one could consider.  One such direction is to try and estimate $\PM(K_{r\times 2n/r}-D)/\PM(K_{r\times 2n/r})$ when $D$ is a $d$-regular subgraph of $K_{r\times 2n/r}$ for some $d>1$.  It is possible that more complicated versions of our arguments here could be effective for this problem.
	
	A different direction in the spirit of Theorem~\ref{thm:larger} is to see to what extent Conjecture~\ref{conj:main} can be generalized to graphs $G$ other than $K_{r\times 2n/r}$.  For example, is it true that for all $\alpha>1$, a sequence of $2n$-vertex $\alpha n$-regular graphs $G$ satisfies \[\lim_{n\to \infty} \frac{\PM(G-M)}{\PM(G)}=e^{-1/\alpha}\]
	for any perfect matching $M\subseteq G$?  It seems likely that this statement is far too strong to be true, but we do not know of any counterexamples.  We note that the question for $\alpha<1$ does not make sense since $G$ may not have a perfect matching, and that the result is false at $\alpha=1$.  In particular, the result fails at $\alpha=1$ by considering $G$ to be $K_n\cup K_n$ together with a perfect matching $M$ (since for $n$ odd, $\PM(G-M)=0$).

	\textbf{Acknowledgements}.  We thank Dan Johnston for informing us of an updated version of \cite{johnston2022deranged}, Mark Kayll for suggesting edits to the original manuscript, and Cory Palmer for telling us about Conjecture~\ref{conj:main} at the BSM 100/3 reunion conference.
	\bibliographystyle{abbrv}
	\bibliography{bibliography}
\end{document}